\documentclass[12pt]{article}
\usepackage{fullpage,
}

\usepackage{bm}
\usepackage{amsmath,amssymb}
\usepackage{amsthm}

\usepackage{epsfig}
\usepackage{graphicx}
\usepackage{makeidx}

\newcommand{\K}{{\mathcal K}}

\newcommand{\N}{{\cal N}}

\newtheorem{thm}{Theorem}
\newtheorem{lem}[thm]{Lemma}

\newcommand{\bk}{{\rm bk}}

\newcommand{\true}{{\rm true}}


\graphicspath{{../../figures/}}
\DeclareGraphicsRule{*}{mps}{*}{}


\begin{document}

\title{Functional Gaussian processes for regression with linear PDE models}
\date{\empty}

\author{ N.~C. Nguyen\footnote{MIT Department of Aeronautics and Astronautics, 77 Massachusetts Ave., Cambridge, MA 02139, USA, email: \texttt{cuongng@mit.edu}} \, and J. Peraire\footnote{MIT Department of Aeronautics and Astronautics, 77 Massachusetts Ave., Cambridge, MA 02139, USA, email: \texttt{peraire@mit.edu}}}

\maketitle

\begin{abstract}
In this paper, we present a new statistical approach to the problem of incorporating experimental observations into a mathematical model described by linear partial differential equations (PDEs) to improve the prediction of the state of a physical system. We augment the linear PDE with a functional that accounts for the uncertainty in the mathematical model and is modeled as a {\em Gaussian process}. This gives rise to a stochastic PDE which is characterized by the Gaussian functional. We develop a {\em functional Gaussian process regression}  method to determine the posterior mean and covariance of the Gaussian functional, thereby solving the stochastic PDE to obtain the posterior distribution for our prediction of the physical state. Our method has the following features which distinguish itself from other regression methods. First, it incorporates both the mathematical model and the observations into the regression procedure. Second, it can handle the observations given in the form of linear functionals of the field variable.  Third, the method is non-parametric in the sense that it provides a systematic way to optimally determine the prior covariance operator of the Gaussian functional based on the observations. Fourth, it provides the posterior distribution quantifying the magnitude of uncertainty in our prediction of the physical state.  We present numerical results to illustrate these features of the method and compare its performance to that of the standard Gaussian process regression.
\end{abstract}

\newpage

\section{Introduction}

 
Partial differential equations (PDEs) are used to mathematically model a wide variety of physical phenomena such as heat transfer, fluid flows, electromagnetism, and structural deformations. A PDE model of a physical system is typically described by conservation laws, constitutive laws, material properties, boundary conditions, boundary data, and geometry. In practical applications, the mathematical model described by the PDEs is only an approximation to the real physical system due to ({\em i}) the deliberate simplification of the mathematical model to keep it tractable (by ignoring certain physics or certain boundary conditions that pose computational difficulties), and ({\em ii}) the uncertainty of the available data (by using geometry, material property and boundary data that are not exactly the same as those of the physical system). We refer to the PDE model (available to us) as the {\em best knowledge PDE model} \cite{Yano2013} and to its solution as the {\em best knowledge state}. To assess the accuracy of the best knowledge model in predicting the physical system, the best knowledge state needs to be compared against experimental data, which typically will have some level of noise.

In cases where the discrepancy between the PDE model and the experimental data is beyond an acceptable level of accuracy, we need to improve the current PDE model. There are  several approaches to defining a new improved model. {\em Parameter estimation} \cite{Tarantola2005,Zhang1997} involves calibrating  some parameters in the model to match the data.  An alternative approach to obtain an improved model is {\em data assimilation} \cite{Kalman1960,LewisLakshmivarahanDhall2006,Li2001,Lorenc1981,Sondergaard2013}. Broadly speaking, data assimilation is a numerical procedure by which we incorporate observations into a mathematical model to reflect the errors inherent in our mathematical modeling of the physical system. Although data assimilation shares the same objective as parameter estimation, it differs from the latter in methodology. More specifically, data assimilation does not assume any parameters to be calibrated; instead, data assimilation defines a new model that matches the observations as well as possible, while being as close as possible to the best knowledge model. Another approach is {\em data interpolation} \cite{Barrault2004,everson95karhunenloeve,Maday2008,Nguyen2008a,Patera2012,willcox06:_gappy} which involves computing  a collection of solutions (snapshots) of a parametrized or time-varying mathematical model and reconstructing the physical state by fitting the experimental data to the snapshots.

A widely used technique for obtaining an improved model in parameter estimation and data assimilation is {\em least squares regression} \cite{Cohen2013,LewisLakshmivarahanDhall2006,JohnWolbergAu2005}. Least squares is a deterministic regression approach that provides an estimate for the physical state which is optimal in least squares sense. However, it does not provide a means to quantify the prediction uncertainty. A recent work \cite{Yano2013} poses the least-square regression as a regularized saddle point Galerkin formulation which admits interpretation from a variational framework and permits its extension to {\em Petrov-Galerkin formulation}. While the Petrov-Galerkin formulation provides more flexibility than the Galerkin formulation, it does not quantify the uncertainty in the prediction either. A popular statistical approach in parameter estimation and data assimilation is {\em Bayesian inference} \cite{ElMoselhy2012,AndrewGelmanJohnCarlinHalSternDavidDunsonAkiVehtari2013,SimoSarkka2013,Wikle2007}. In Bayesian inference an estimate of the physical state is described by random variables and the posterior probability distribution of the estimate is determined by the data according to Bayes' rule \cite{AndrewGelmanJohnCarlinHalSternDavidDunsonAkiVehtari2013,SimoSarkka2013}.  Therefore, Bayesian inference provides a powerful framework to quantify the prediction uncertainties.

In this paper, we introduce a new statistical data assimilation approach to the problem of incorporating observations into the best knowledge model to predict the state of physical system. Our approach has its root in Gaussian process (GP) regression \cite{O'Hagan1978,RasWil06,Sacks1989}. We augment the linear PDE with a functional that accounts for the uncertainty in the mathematical model and is modeled as a {\em Gaussian process}.\footnote{In the cases considered, the physical system is not stochastic but deterministic. The introduction of the Gaussian functional serves to represent uncertainties in the best knowledge model and in the data, {\em not} in the physical system {\em per se}.} This gives rise to a stochastic PDE whose solutions are characterized by the Gaussian functional. By extending the standard GP regression for {\em functions of vectors} to {\em functionals of functions}, we develop a {\em functional Gaussian process regression} method  to determine the posterior distribution of the Gaussian functional, thereby solving the stochastic PDE for our prediction of the physical state. Our method is devised as follows. We first derive a {\em functional regression problem} by making use of the adjoint states and the observations. We next solve the functional regression problem by an application of the principle of Gaussian processes to obtain the posterior mean and covariance of the Gaussian functional. Finally, we compute the posterior distribution for our estimate of the physical state. A crucial ingredient in our method is the {\em covariance operator} representing the {\em prior} of the Gaussian functional. The bilinear covariance operators considered incorporate a number of free parameters (the so-called {\em hyperparameters}) that can be optimally determined from the measured data by maximizing a marginal likelihood.

Our functional Gaussian process regression method can be viewed as a generalization of the standard GP regression from a finite dimensional vector (input) space to an infinite dimensional function (input) space. GP regression is a well-established technique to construct maps between inputs and outputs based on a set of sample, or training, input and output pairs, but does not offer a direct method to incorporate prior knowledge, albeit approximate, from an existing model relating the inputs and outputs. By combining the best knowledge model with the data, our method can greatly improve the prediction of the physical system. 

Furthermore, we introduce a {\em nonparametric Bayesian inference} method for linear functional regression with Gaussian noise. It turns out that nonparametric Bayesian inference and functional GP regression represent two different views of the same procedure. Specifically, we can think of functional GP regression as defining a distribution over functionals and doing inference in the space of functionals --- the {\em functional-space view}. We can think of nonparametric Bayesian inference as defining a distribution over weights and doing inference in the space of weights --- the {\em weight-space view}. Theoretically, functional GP regression can be interpreted as an application of the {\em kernel trick} \cite{RasWil06} to nonparametric Bayesian inference, thereby avoiding an explicit construction of the feature map.

The paper is organized as follows. In Section 2, we present a description of the problem considered.  In Section 3, we give an overview of Gaussian processes for regression problems. In Section 4, we introduce functional Gaussian processes for functional regression problems via linear PDE models. In Section 5, we present numerical results to demonstrate our method and compare its performance to that of function Gaussian process regression. In Section 6, we provides some concluding remarks on future research. Finally, in the Appendix, we describe our nonparametric Bayesian inference method.

\section{Motivation and Problem Statement}

Let $\Omega^\true \in \mathbb{R}^n$ denote a bounded open domain with Lipschitz boundary. Let $V^\true(\Omega^\true)$ be an appropriate real-valued function space in which the true state $u^{\true}$ resides. A weak formulation of the true PDE model can be stated as: Find $u^{\true} \in V^{\true}$ and $\bm s^{\true} \in \mathbb{R}^M$ such that
\begin{subequations}
\label{eq5}
\begin{alignat}{2}
a^{\true}(u^{\true}, v) & = \ell^{\true}(v),  \quad \quad \ \forall\, v \in V^{\true},\\
s^{\true}_i  & =  c^{\true}_i(u^{\true}), \quad i = 1,\ldots,M,
\end{alignat}
\end{subequations}
where $a^{\true} : V^{\true} \times V^{\true} \to \mathbb{R}$ is a bilinear form, $\ell^{\true} : V^{\true} \to \mathbb{R}$ is a linear functional, and $c_i^{\true}  : V^{\true} \to \mathbb{R}, i = 1,\ldots,M$ are observation functionals. We assume that the true PDE model (\ref{eq5}) is well defined and accurately describes the physical system of interest. 

In actual practice, we do not have access to $a^{\true}, \ell^{\true},$ $c_i^{\true}$, and $V^\true(\Omega^\true)$. Hence, we can not compute $u^{\true}$ and $\bm s^{\true}$. However, we assume that we have access to the ``best knowledge'' of $a^{\true}, \ell^{\true},$ $c_i^{\true}, i = 1,\ldots, M$, and $V^\true(\Omega^\true)$, which shall be denoted by $a^{\bk}, \ell^{\bk},$ $c_i^{\bk}, i = 1,\ldots, M$, and $V^\bk(\Omega^\bk)$, respectively. We then define the best knowledge PDE model:  Find $u^{\rm bk} \in V^\bk$ and $\bm s^{\rm bk} \in \mathbb{R}^M$ such that
\begin{subequations}
\label{eq6}
\begin{alignat}{2}
a^{\rm bk}(u^{\rm bk}, v) & = \ell^{\rm bk}(v),  \quad \ \ \forall\,  v \in V^\bk,\\
s_i^{\rm bk}  & =  c^{\rm bk}_i(u^{\rm bk}), \quad i = 1,\ldots,M .
\end{alignat}
\end{subequations}
In the remainder of this paper, we shall drop the superscript ``bk'' for the quantities associated with the best knowledge model  to simplify the notation. (In practice, we replace the continuous function space $V(\Omega)$ with a discrete approximation space, which is assumed to be large enough that the discrete solution is indistinguishable from the continuous one.)

We now assume that we are given the observed data $\bm d \in \mathbb{R}^M$, which are the $M$ measurements of the true output vector $\bm s^{\true}$. We further assume that the measurements differ from the true outputs $\bm s^{\rm true}$ by additive Gaussian noise $\bm \varepsilon$, namely,
\begin{equation}
\label{datatrue}
\bm d = \bm s^{\true} + \bm \varepsilon,
\end{equation}
where $\varepsilon_i, i = 1,\ldots,M$ are independent, identically distributed Gaussian distributions with zero mean and variance $\sigma^2$.  If $\sigma$ is sufficiently small within the acceptable accuracy then we can use the observed data $\bm d$ to validate the best knowledge model (\ref{eq6}). If the best knowledge outputs $\bm s$ are close enough to $\bm d$ within the noise level then we may trust the best knowledge model to predict the behavior of the true model. In many cases, the best knowledge outputs do not match the observed data due to various sources of uncertainty from physical modeling, constitutive laws, boundary conditions, boundary data, material properties, and geometry. 

We are interested in improving the best knowledge model when it does not produce a good estimate of the true state. In particular, we propose a method to compute a better estimate for the true state by combining the best knowledge model with the observed data. Our method has its root in Gaussian process regression. Before proceeding to describe the proposed method we review the ideas behind Gaussian processes.

\section{Gaussian Process Regression}

We begin by assuming that we are given a training set of $M$ observations
\begin{equation}
 S = \{(\bm x_i,y_i), i = 1,\ldots,M\}, 
 \end{equation}
where $\bm x_i \in \mathbb{R}^N$ denotes an input vector of dimension $N$ and $y_i \in \mathbb{R}$ denotes a scalar real-valued output. The training input vectors $\bm x_i, i=1,\ldots,M$ are aggregated in the $N \times M$ real-valued matrix $\bm X$, and the outputs are collected in the real-valued vector $\bm y$, so we can write $S = (\bm X,\bm y)$. We assume that 
\begin{equation}
\label{inputoutput}
y_i = h(\bm x_i) + \mathcal{N}(0,\sigma^2), 
\end{equation}
where $h(\bm x)$ is the true but unknown function which we want to infer. The unknown function $h(\bm x)$ is modeled as a Gaussian process\footnote{A Gaussian process is a generalization of the Gaussian probability distribution. A Gaussian process governs the properties of Gaussian random functions, whereas a Gaussian probability distribution describes the properties of Gaussian random variables (scalars or vectors).} with zero mean\footnote{The Gaussian process is assumed to have zero mean because we can always subtract the original outputs $\bm y$ from its average $\bar{y} = \frac{1}{M} \sum_{i=1}^M y_i$ to obtain new outputs with zero average. We then work with the new outputs and add the average $\bar{y}$ to our Gaussian process estimator.}, for simplicity,  and covariance function $\kappa(\bm x,\bm x')$, namely, 
\begin{equation}
\label{hprior}
h(\bm x) \sim \mathcal{GP}(0, \kappa(\bm x,\bm x')) \ .
\end{equation}
From a Bayesian perspective, we encode our belief that instances of $h(\bm x)$ are drawn from a Gaussian process with zero mean and covariance function $\kappa(\bm x, \bm x')$ {\em prior to taking into account observations}.\footnote{At this point, one may ask what if our belief is wrong, that is, what if the covariance function $\kappa$ is not correctly chosen. Of course, choosing a wrong covariance function will result in very poor prediction. Hence, the covariance function should not be chosen arbitrarily. As discussed later, Gaussian processes provide a framework for optimal selection of a covariance function based on the observed data.} Mathematically speaking, $h(\bm x)$ is assumed to reside in a reproducing kernel Hilbert space \cite{Aronszajn} spanned by the eigenfunctions of the covariance function $\kappa(\bm x, \bm x')$.

Let $\bm X^*$ be the $N \times M^*$ matrix that contains $M^*$ test input vectors $\bm x^*_j, j =1,\ldots,M^*$ as its columns. Since $h(\bm x) \sim \mathcal{GP}(0, \kappa(\bm x,\bm x'))$, the joint distribution of the observed outputs and the function values at the test input vectors  is
\begin{equation}
\label{jointdistribute}
\left[
\begin{array}{c}
\bm y \\
\bm h^*
\end{array}
\right] \sim \mathcal{N}\left(0, 
\left[
\begin{array}{cc}
\bm{\K}(\bm{X},\bm{X}) + \sigma^2 \bm I & \bm{\K}(\bm{X},\bm{X}^*) \\
\bm{\K}(\bm{X}^*,\bm{X}) & \bm{\K}(\bm{X}^*,\bm{X}^*)
\end{array}
\right]
 \right),
\end{equation}
where $\bm h^* \in \mathbb{R}^M$, $\bm{\K}(\bm{X},\bm{X}) \in \mathbb{R}^{M \times M}$, $\bm{\K}(\bm{X},\bm{X}^*) \in \mathbb{R}^{M \times M^*}$, $\bm{\K}(\bm{X}^*,\bm{X}) \in \mathbb{R}^{M^* \times M}$, and $\bm{\K}(\bm{X}^*,\bm{X}^*) \in \mathbb{R}^{M^* \times M^*}$ have entries 
\begin{equation}
\begin{array}{rcll}
h^*_i  & = & h(\bm x^*_i), &  \quad i = 1, \ldots, M^* ,  \\
{\K}_{ij}(\bm{X},\bm{X}) & = & \kappa(\bm{x}_i,\bm{x}_j), &  \quad i = 1, \ldots, M, j = 1, \ldots, M, \\
{\K}_{ij}(\bm{X},\bm{X}^*) & = & \kappa(\bm{x}_i,\bm x^*_j), &  \quad i = 1, \ldots, M, j = 1, \ldots, M^* , \\
{\K}_{ij}(\bm{X}^*,\bm{X}) & = & \kappa(\bm x^*_i,\bm{x}_j), &  \quad i = 1, \ldots, M^*, j = 1, \ldots, M , \\
{\K}_{ij}(\bm{X}^*,\bm{X}^*) & = & \kappa(\bm{x}_i^*,\bm{x}_j^*), &  \quad i = 1, \ldots, M^*, j = 1, \ldots, M^* ,
\end{array}
\end{equation} 
respectively. We next apply the conditional distribution formula (see \cite{RasWil06}) to the joint distribution (\ref{jointdistribute}) to obtain the {\em predictive distribution}  for $\bm h^*$ as
\begin{equation}
\bm h^* |\bm  y, \bm{X}, \bm{X}^* \sim \mathcal{N}(\bar{\bm h}^*, \mathrm{cov}(\bm h^*)),
\end{equation}
where
\begin{equation}
\begin{split}
\bar{\bm h}^* &= \bm{\K}(\bm{X}^*,\bm{X}) \bm \alpha, \\
\mathrm{cov}(\bm h^*) &= \bm{\K}(\bm{X}^*,\bm{X}^*) - \bm{\K}(\bm{X}^*,\bm{X}) \bm C^{-1} \bm{\K}(\bm{X},\bm{X}^*) ,
\end{split}
\end{equation}
and $\bm \alpha \in \mathbb{R}^M$ and $\bm C \in \mathbb{R}^{M \times M}$ are given by
\begin{equation}
\bm C  \bm \alpha = \bm y, \qquad \bm C = \bm{\K}(\bm{X},\bm{X}) + \sigma^2 \bm I  \ .
\end{equation}
Note that the predictive mean $\bar{\bm h}^*$  is a linear combination of $M$ kernel functions, each one centered on a training input vector. Note also that the predictive covariance $\mathrm{cov}(\bm h^*)$ does not explicitly depend on the observed data $\bm y$, but only on the training input vectors $\bm X$ and the covariance function $\kappa$. This is a property of the Gaussian distribution. However, as discussed below, the covariance function $\kappa$ can be determined by using the observed data. As a result, the predictive covariance implicitly depends on the observed data.

Typically, the covariance function $\kappa$ has some free parameters $\bm \theta = (\theta_1,\ldots,\theta_Q)$, so that the matrix $\bm C$ depends on $\bm \theta$. These free parameters are called {\em hyperparameters}. The hyperparameters have a significant impact on the predictive mean and covariance.  They are determined by maximizing the log marginal likelihood (see \cite{RasWil06}):
\begin{equation}
\label{LMLGP}
\log p(\bm y | \bm{X}, \bm \theta) = - \frac{1}{2} \bm y^T \bm C(\bm \theta)^{-1} \bm y - \frac{1}{2} \log (\det(\bm C(\bm \theta))) - \frac{M}{2} \log (2 \pi) .
\end{equation}
Once we choose a specific form for $\kappa$ and determine its hyperparameters, we can compute $\bar{\bm h}^*$ and $\mathrm{cov}(\bm h^*)$ for any given $\bm X^*$. Gaussian processes also provide us a mean to choose an appropriate family among many possible families of covariance functions. Choosing a covariance function for a particular application involves both determining hyperparameters within a family and comparing across different families. This step is termed as {\em model selection} \cite{RasWil06}. 

We see that the standard GP regression provides us not only a posterior mean, but also a posterior covariance which characterizes uncertainty in our prediction of the true function. Moreover, it allows us to determine the optimal covariance function and thus the optimal reproducing Kernel Hilbert space in which the true function is believed to reside. These features differentiate GP regression from parametric regression methods such as least-squares regression, which typically provides the maximum likelihood estimate only.  However,  GP regression tends to require larger sample sizes than parametric regression methods because the data must supply enough information to yield a good covariance function by using model selection. 

There are a number of obstacles that prevent us from applying the standard GP regression to our problem of interest described in the previous section. First, our outputs are in general not the evaluations of the state at spatial coordinates. Instead, they are linear functionals of the state. Second, the standard GP regression described here does not allow us to make use of the best knowledge model. The best knowledge model plays an important role because it carries crucial prior information about the true model. By taking advantage of the best knowledge model, we may be able to use far less observations to obtain a good prediction and thus address the main disadvantage of the standard GP regression. We propose a new approach to overcome these obstacles.

\section{Functional Gaussian Process Regression}

\subsection{A stochastic PDE model}

Let  $g : V \to \mathbb{R}$ be a linear functional. We introduce a new mathematical model: Find $u^* \in V$ and $\bm s^* \in \mathbb{R}^M$ such that
\begin{subequations}
\label{eq7}
\begin{alignat}{2}
a(u^*, v)  + g(v) & = \ell(v) ,  \quad \ \ \forall v \in V,\\
s^*_i  & =  c_i(u^*), \quad i = 1,\ldots,M .
\end{alignat}
\end{subequations}
Notice that the new model (\ref{eq7}) differs from the best knowledge model (\ref{eq6}) by the functional $g$. We can determine $u^*$ and $\bm s^*$ only if $g$ is known. The functional $g$ thus characterizes the solution $u^*$ and the output vector $\bm s^*$ of the model (\ref{eq7}). We note that if $g(v) = \ell(v) - a(u^{\true},v)$ then we $u^* = u^{\true}$. Unfortunately, this particular choice of  $g$ requires the true state $u^\true$ which we do not know and thus want to infer. 

In order to capture various sources of uncertainty in the best knowledge model,  we represent  $g$ as a {\em functional Gaussian process}\footnote{For now, a functional Gaussian process can be thought of as a generalization of the Gaussian process from a finite dimensional vector space to an infinite dimensional function space. } with  zero mean and covariance  operator $k$, namely,
\begin{equation}
\label{fprior}
g(v) \sim \mathcal{FGP}(0, k(v,v')), \qquad \forall\, v, v' \in V\ .
\end{equation}
Notice that there are three main differences between the functional Gaussian process (\ref{fprior}) and the Gaussian process (\ref{hprior}).  First, $g$ is a functional, whereas $h$ is a function. Second, $v$ is a function, whereas $\bm x$ is a vector. And third, $k(v,v')$ is generally a differential and integral operator of $v$ and $v'$, whereas $\kappa(\bm x, \bm x')$ is a function of $\bm x$ and $\bm x'$.  We will require that the covariance operator $k : V \times V \to \mathbb{R}$ is symmetric positive-definite. That is,
\begin{equation}
k(v,v') = k(v',v),  \quad \mbox{and}  \quad k(v,v) > 0, \qquad \forall v, v'  \in V \ . 
\end{equation}
As the covariance operator $k$ characterizes the space of all possible functionals prior to taking into account the observations, it plays an important role in our method. The selection of a covariance operator will be discussed later.

Since $g$ is a functional Gaussian process, the model (\ref{eq7}) becomes a stochastic PDE.  In order to solve the stochastic PDE (\ref{eq7}), we need to compute the {\em posterior mean} and {\em posterior covariance} of $g$ after accounting for the observed data $\bm d$. To this end, we formulate a functional regression problem and describe a procedure for solving it as follows.

\subsection{Functional regression problem}

We first introduce the adjoint problems: for $i = 1,\ldots,M$ we find $\phi_i \in V$ such that
\begin{equation}
\label{adjoint}
a(v,\phi_i) = -c_i(v), \quad \forall v \in V.
\end{equation}
We note that the adjoint states $\phi_i$ depend only on the output functionals $c_i$ and the bilinear form $a$. It follows from (\ref{eq6}), (\ref{eq7}), and (\ref{adjoint}) that
\begin{equation}
\label{adjointout}
g(\phi_i)  = \ell(\phi_i) - a(u^*, \phi_i) = a(u,\phi_i) + c_i(u^*) = c_i(u^*) -  c_i(u) = s^*_i - s_i ,
\end{equation}
for $i = 1,\ldots,M$. Moreover, we would like our stochastic PDE to produce the outputs $\bm s^*$ that are consistent with the observed data $\bm d$ in such a way that
\begin{equation}
\label{dataout}
 d_i = s^*_i  +  \N(0,\sigma^2), \qquad i = 1,\ldots,M. 
\end{equation}
This equation is analogous to (\ref{datatrue}) which relates the observed data $\bm d$ to the true outputs $\bm s^{\rm true}$. We substitute $s_i^* = d_i  -  \N(0,\sigma^2)$ into (\ref{adjointout}) to obtain
\begin{equation}
d_i  - s_i =  g(\phi_i) +  \N(0,\sigma^2) , \qquad i = 1,\ldots,M .
\end{equation}
Notice that this expression characterizes the relationship between $g(\phi_i)$ and $d_i - s_i$ in the same way (\ref{inputoutput}) characterizes the relationship between $h(\bm x_i)$ and $y_i$.

We now introduce a training set of $M$ observations
\begin{equation}
T = \{(\phi_i,d_i - s_i), \ i = 1,\ldots,M\} ,
 \end{equation}
and use this training set to learn about $g$. More specifically, we wish to determine $g(\phi^*)$ for any given $\phi^* \in V$ based on the training set $T$. This problem is similar to the regression problem described in the previous section and is named the {\em functional regression problem} to emphasize that the object of interest $g$ is a functional. We next describe the solution of the functional regression problem.

\subsection{Regression procedure}

Let $\Phi = [\phi_1,\ldots,\phi_M]$ be a collection of $M$ adjoint states as determined by (\ref{adjoint}). Let $\Phi^* = [\phi^*_1 \in V,\ldots,\phi_M^* \in V]$ be a collection of $M^*$ test functions. The joint distribution of the observed outputs and the functional values for the test functions according to the prior (\ref{fprior}) is given by
\begin{equation}
\label{jointdist}
\left[
\begin{array}{c}
\bm d - \bm s \\
\bm g^*
\end{array}
\right] \sim \mathcal{N}\left(0, 
\left[
\begin{array}{cc}
\bm{K}(\Phi,\Phi) + \sigma^2  \bm I & \bm{K}(\Phi,\Phi^*) \\
\bm{K}(\Phi^*,\Phi) & \bm{K}(\Phi^*,\Phi^*)
\end{array}
\right]
 \right),
\end{equation}
where $\bm g^* \in \mathbb{R}^{M^*}$, $\bm{K}(\Phi,\Phi) \in \mathbb{R}^{M \times M}$, $\bm{K}(\Phi,\Phi^*) \in \mathbb{R}^{M \times M^*}$, $\bm{K}(\Phi^*,\Phi) \in \mathbb{R}^{M^* \times M}$, and $\bm{K}(\Phi^*,\Phi^*) \in \mathbb{R}^{M^* \times M^*}$ have entries 
\begin{equation}
\begin{array}{rcll}
g^*_i  & = & g(\phi^*_i), &  \quad i = 1, \ldots, M^* ,  \\
{K}_{ij}(\Phi,\Phi) & = & k(\phi_i,\phi_j), &  \quad i = 1, \ldots, M, j = 1, \ldots, M, \\
{K}_{ij}(\Phi,\Phi^*) & = & k(\phi_i,\phi^*_j), &  \quad i = 1, \ldots, M, j = 1, \ldots, M^*, \\
{K}_{ij}(\Phi^*,\Phi) & = & k(\phi^*_i,\phi_j), &  \quad i = 1, \ldots, M^*, j = 1, \ldots, M , \\
{K}_{ij}(\Phi^*,\Phi^*) & = & k(\phi_i^*,\phi_j^*), &  \quad i = 1, \ldots, M^*, j = 1, \ldots, M^* ,
\end{array}
\end{equation} 
respectively. It thus follows that the {\em predictive distribution}  for $\bm g^*$ is 
\begin{equation}
\bm g^* | (\bm d - \bm s), \Phi, \Phi^* \sim \mathcal{N}(\bar{\bm  g}^*, \mathrm{cov}(\bm  g^*)),
\end{equation}
where
\begin{equation}
\bar{\bm  g}^* = \bm{K}(\Phi^*,\Phi) \bm  \beta,  \qquad \mathrm{cov}(\bm  g^*) = \bm{K}(\Phi^*,\Phi^*) - \bm{K}(\Phi^*,\Phi) \bm  D^{-1} \bm{K}(\Phi,\Phi^*) ,
\end{equation}
and $\bm  \beta \in \mathbb{R}^M$ and $\bm  D \in \mathbb{R}^{M \times M}$ are given by
\begin{equation}
\label{betaD}
\bm  D \bm  \beta = \bm  d - \bm  s, \qquad \bm  D = \bm{K}(\Phi,\Phi) + \sigma^2 \bm  I  \ .
\end{equation}
Notice that we have correspondence with function Gaussian process regression described in the previous section, when identifying $(\Phi,\bm d - \bm s)$ with $(\bm X,\bm  y)$, $(\Phi^*,\bm g^*)$ with $(\bm X^*,\bm  h^*)$, and $k(\cdot,\cdot)$ with $\kappa(\cdot,\cdot)$. 

While our approach share similarities with function Gaussian process regression, it differs from the latter in many important ways. We summarize in Table 1 the differences between function Gaussian process regression and functional Gaussian process regression. 

\begin{table}[ht]
\label{tab1}
\begin{center}
\scalebox{0.915}{
\begin{tabular}{|c||c|c|}
\hline
Quantities & Function Gaussian process & Functional Gaussian process \\
\hline 
\hline
& &   \\[-2ex]
Input &vector $\bm x \in \mathbb{R}^N$ & function $v \in V$ \\[1ex]
\hline
& & \\[-2ex]
Output &function $h(\bm x)$ & functional $g(v)$ \\[1ex]
\hline
& & \\[-2ex]
Prior & $h \sim \mathcal{N}(0,\kappa(\cdot,\cdot))$ & $g \sim \mathcal{N}(0,k(\cdot,\cdot))$ \\[1ex]
\hline
& & \\[-2ex]
Kernel &  function $\kappa : \mathbb{R}^N \times \mathbb{R}^N \to \mathbb{R}$ & operator $k: V \times V \to \mathbb{R}$ \\[1ex]
\hline
& & \\[-2ex]
Training inputs & $\bm X \in \mathbb{R}^{N \times M}$ & $\Phi \in V^M$ adjoint states \\[1ex]
\hline
& & \\[-2ex]
Observations & $\bm y = h(\bm X) + \mathcal{N}(0,\sigma)$ &  $\bm d - \bm s = g(\Phi)  + \mathcal{N}(0,\sigma)$  \\[1ex]
\hline
& & \\[-2ex]
Coefficients & $\bm C \bm  \alpha = \bm y,  \bm C = \left[ \bm{\K}(\bm X,\bm X) + \sigma^2\bm  I \right]$ & $\bm D  \bm \beta = \bm d - \bm s,\bm  D = \left[ \bm K(\Phi,\Phi) + \sigma^2 \bm I \right]$  \\[1ex]
\hline
& & \\[-2ex]
Test inputs & $\bm X^* \in \mathbb{R}^{N \times M^*}$ & $\Phi^* \in V^{M^*}$ \\[1ex]
\hline
& & \\[-2ex]
Mean & $\bm{\K}(\bm X^*,\bm X)\bm  \alpha$ &  $\bm K(\Phi^*,\Phi) \bm \beta$ \\[1ex]
\hline
& & \\[-2ex]
Covariance & $\bm{\K}(\bm X^*\bm ,\bm X^*) - \bm{\K}(\bm X^*,\bm X) \bm C^{-1} \bm{\K}(\bm X,\bm X^*)$ &  $\bm K(\Phi^*,\Phi^*) - \bm K(\Phi^*,\Phi)\bm  D^{-1} \bm K(\Phi,\Phi^*) $ \\[1ex]
\hline
\end{tabular}
}
\caption{Comparison between function Gaussian process regression and functional Gaussian process regression. Note that the best knowledge model enters in the functional Gaussian process regression through the adjoint states $\Phi$ and the best knowledge outputs $\bm s$.}
\end{center}
\end{table}

In the Appendix A, we introduce a nonparametric Bayesian framework for linear functional regression with Gaussian noise. It turns out that this nonparametric Bayesian framework is equivalent to the functional GP regression described here. In fact,  functional GP regression can be viewed as an application of the {\em kernel trick} to nonparametric Bayesian inference for linear functional regression, thereby avoiding the computation of the eigenfunctions of the covariance operator $k$. We next introduce a family of bilinear covariance operators and then describe a method for determining the hyperparameters.

\subsection{Covariance operators}

The covariance operator $k$ is a crucial ingredient in our approach. Here, we consider a class of bilinear covariance operators parametrized by $\bm \theta = (\theta_1, \theta_2)$  of the form:
\begin{equation}
k(v,v';\bm \theta) = \theta_{1} \int_\Omega v v' d \bm x +  \theta_{2}  \int_{\Omega} \nabla v \cdot \nabla v' d \bm x \ .
\end{equation}
More general forms of the covariance operator are possible provided that they are symmetric and positive definite.

In order for a covariance operator to be used in our method, we need to specify its hyperparameters $\bm \theta$. Fortunately, Gaussian processes allow us to determine the hyperparameters by using the observed data. In order to do this, we first calculate the probability of the observed data given the hyperparameters, or {\em marginal likelihood} and choose $\bm \theta$ so that this likelihood is maximized. We note from (\ref{jointdist}) that
\begin{equation}
\label{ML}
p(\bm d - \bm s |\Phi, \bm \theta ) = \N(0, \bm D(\bm \theta)),
\end{equation}
where the matrix $\bm D(\bm \theta)$ as defined in (\ref{betaD}) depends on $k(\cdot,\cdot;\bm \theta)$ and thus on $\bm \theta$ as well. Rather than maximizing (\ref{ML}), it is more convenient to maximize the log marginal likelihood which is given by,
\begin{equation}
\label{LML}
\log p(\bm d - \bm s |\Phi, \bm \theta ) =  - \frac{1}{2} (\bm d- \bm s)^T \bm D(\bm \theta)^{-1} (\bm d- \bm s) - \frac{1}{2} \log ( \det(\bm D(\bm \theta)))- \frac{M}{2} \log (2 \pi) .
\end{equation}
Thus, we find $\bm \theta$ by solving the maximization problem 
\begin{equation}
\bm \theta = \arg \max_{\bm \theta'  \in \mathbb{R}^{2}} \log p(\bm d - \bm s |\Phi, \bm \theta' ) .
\end{equation}
Hence, the hyperparameters $\bm \theta$ are chosen as the maximizer of the log marginal likelihood. 


Once we determine the covariance operator,  we can compute $(\bar{\bm  g}^*, \mathrm{cov}(\bm  g^*))$ for any given set $\Phi^*$ of test functions as described in Subsection 4.3. Therefore, functional GP regression is {\em non-parametric} in the sense that both the  hyperparameters are chosen in light of the observed data. In order words, the data are used to define both the prior covariance and the posterior covariance. In contrast, parametric regression methods use a number of parameters to define the prior and combine this prior with the data to determine the posterior prediction. It remains to describe how to compute the posterior mean and covariance of the solution  $u^*$ of the stochastic PDE.

\subsection{Computation of the mean state and covariance}

We recall that our stochastic PDE model consists of finding ${u}^* \in V$ such that
\begin{equation}
\label{weakform}
a({u}^*, v)    = \ell(v) - g(v),  \quad \forall v \in V .
\end{equation}
Let $\{v_j(\bm x)\}_{j = 1}^{J}$ be a ``suitable'' basis set of the function space $V(\Omega)$, where $J$ is the dimension of  $V(\Omega)$. Since the functional $g$ is Gaussian and the best knowledge model is linear, we can express the solution of the stochastic PDE (\ref{weakform}) as  
\begin{equation}
\label{ustarsum}
u^*(\bm x) = \sum_{j=1}^J \gamma_j^* v_j(\bm x),  \qquad \bm {\gamma}^* \sim \N(\bar{{\bm \gamma}}^*,\mathrm{cov}({\bm \gamma^*})) .
\end{equation}  
In order to determine $\bar{{\bm{\gamma}}}^*$ and $\mathrm{cov}({\bm{\gamma}^*})$, we choose $v = v_i, i = 1,\ldots,J$ in (\ref{weakform}) to arrive at the stochastic linear system:
\begin{equation}
\label{linearsystem}
\bm {A} {\bm{\gamma}}^* = \bm {l} -\bm {g}^*,
\end{equation}
where ${A}_{ij} = a(v_i, v_j)$, ${l}_{i} = \ell(v_i)$ for $i,j = 1,\ldots,J$, and $\bm {g}^* \sim \N(\bar{\bm{g}}^*, \mathrm{cov}(\bm{g}^*))$ with
\begin{equation}
\label{gbarcovg}
\bar{\bm{g}}^* = \bm{K}(\Phi^*,\Phi) \bm{\beta},  \qquad \mathrm{cov}(\bm{g}^*) = \bm{K}(\Phi^*,\Phi^*) - \bm{K}(\Phi^*,\Phi)  \bm{D}^{-1}\bm{K}(\Phi,\Phi^*) ,
\end{equation}
for $\Phi^* \equiv [v_1,v_2,\ldots,v_J]$. It thus follows from (\ref{linearsystem}) that
\begin{equation}
\label{gammadis}
\bar{{\bm{\gamma}}}^* = \bm{A}^{-1}(\bm{l} - \bar{\bm{g}}^*), \qquad \mathrm{cov}( \bm{\gamma}^*) = \bm{A}^{-1} \mathrm{cov}(\bm{g}^*) \bm{A}^{-T} ,
\end{equation}
as $\bm g^*$ is Gaussian and $\bm A$ is invertible. 

Now let $\bm x_i \in \Omega, i = 1,\ldots, N$ be spatial points at which we would like to evaluate the predictive mean and covariance of $u^*$. Let $\bm{V} \in \mathbb{R}^{N \times J}$ be a matrix with entries $V_{ij} = v_j( \bm x_i), i=1,\ldots,N, j = 1,\ldots,J$.  It then follows from (\ref{ustarsum}) that 
\begin{equation}
\label{ustar}
\bm{u}^* = \bm V \bm \gamma^* ,
\end{equation}
where $u^*_i = u^*(\bm x_i)$, $i = 1,\ldots,N$.  It follows from (\ref{gammadis}) and (\ref{ustar}) that
\begin{equation}
\bm u^* \sim \N(\bar{\bm{u}}^*, \mathrm{cov}({\bm{u}}^*)), 
\end{equation}
where
\begin{equation}
\label{ustardist}
\bar{\bm{u}}^*  =   \bm U (\bm{l} - \bar{\bm{g}}^*), \qquad \mathrm{cov}({\bm{u}}^*)  =  \bm U \mathrm{cov}(\bm{g}^*) \bm U^T ,
\end{equation}  
with $\bm U = \bm{V} \bm{A}^{-1}$. We examine the posterior distribution as given by (\ref{ustardist}). Note first that  the posterior mean  $\bar{\bm{u}}^*$ is the difference between two terms: the first term $\bm u = \bm U \bm l$ is simply the best knowledge state $u$ evaluated at $\bm x_i, i = 1,\ldots, N$; the second term $\bm U \bar{\bm g}^*$ is a correction term to the best knowledge state and is obtained by using our functional Gaussian process regression. Note also that the posterior covariance is a quadratic form of $\bm U$ with the posterior covariance matrix $\mathrm{cov}(\bm{g}^*)$, showing that the predictive uncertainty grows with the magnitude of $\bm U$.  Hence, the predictive uncertainty depends on the inverse matrix $\bm A^{-1}$. The implementation of our method for computing the posterior distribution (\ref{ustardist}) is shown in Figure 1.

\begin{figure}[htbp]
\centering
\framebox{\makebox[\width][c]{
$ \begin{array}{l}
\mbox{\textbf{Input}:}  \quad a, \ell, \{c_i\}_{i = 1}^M \ (\mbox{best knowledge}), \{v_j\}_{j=1}^J (\mbox{basis functions}),  \ \bm d \ (\mbox{observed data}),  \\[1ex]
\qquad \qquad \sigma \ (\mbox{noise level}), \ k \ (\mbox{covariance operator}), \ \{\bm{x}_i \in \Omega\}_{i=1}^N \ (\mbox{spatial coordinates}) \\[2ex]
\mbox{1. \hspace{5 pt} Compute }  \bm s \mbox{ by solving the best knowledge model (\ref{eq6})}  \\[1.5ex]
\mbox{2. \hspace{5 pt} Compute the adjoint states } \{\phi_i\}_{i=1}^M \mbox{ by solving~(\ref{adjoint})} \\[1.5ex]
\mbox{3. \hspace{5 pt} Compute } (\bar{\bm{g}}^*, \mathrm{cov}(\bm{g}^*)) \mbox{ in (\ref{gbarcovg}) by using functional Gaussian process regression} \\[1.5ex] 
\mbox{4. \hspace{5 pt} Form } \bm{A}, \bm{l}, \bm{V} \mbox{ and solve } \bm A^T \bm U^T = \bm V^T \mbox{ to obtain } \bm {U} \\[1.5ex] 
\mbox{5. \hspace{5 pt} Compute } \bar{\bm{u}}^*  =   \bm U (\bm{l} - \bar{\bm{g}}^*), \ \mathrm{cov}({\bm{u}}^*)  =  \bm U \mathrm{cov}(\bm{g}^*) \bm U^T \\[1.5ex] 
\mbox{\textbf{Return}:}  \quad \bar{\bm{u}}^* \ (\mbox{posterior mean}), \ \mathrm{cov}({\bm{u}}^*) \ (\mbox{posterior covariance}) 
\end{array} $
}}
\caption{Main algorithm for computing the posterior distribution of $\bm u^*$.}
\label{fig:1-0}
\end{figure}



\subsection{Relationship with least-squares regression}

Here, we show an alternative approach to computing the posterior mean in (\ref{ustardist}) by solving a deterministic least-squares problem. We note that the posterior mean $\bar{u}^* \in V$ satisfies 
\begin{equation}
\label{meanweakform}
a(\bar{u}^*, v)    = \ell(v) - \bar{g}^*(v),  \quad \forall v \in V .
\end{equation}
Here, for any $v \in V$, $\bar{g}^*(v)$ is the posterior mean of $g(v)$ and given by
\begin{equation}
\label{meang}
\bar{g}^*(v) = \sum_{j=1}^M \beta_j k(\phi_j, v) , \quad \forall v \in V ,
\end{equation}
where  the adjoint states  $\phi_j, j = 1,\ldots, M$ satisfy  (\ref{adjoint}) and the coefficient vector $\bm \beta$ is the solution of (\ref{betaD}) . It thus follows that the mean state $\bar{u}^* \in V$ satisfies
\begin{equation}
\label{meanweakform2}
a(\bar{u}^*, v)    = \ell(v) -  k(\bar{q}^*, v),  \quad \forall v \in V ,
\end{equation}
where $\bar{q}^* = \sum_{i=1}^M \beta_i \phi_i$ is the weighted sum of the adjoint states. We then evaluate the mean outputs of the stochastic PDE model as
\begin{equation}
\bar{s}^*_i = c_i(\bar{u}^*), \qquad i = 1,\ldots M.
\end{equation}
The following lemma sheds light on the relationship between the mean outputs and the observed data.

\begin{lem}
Assume that the covariance operator $k(\cdot,\cdot)$ is a bilinear form.  We have that $\bar{\bm s}^* = \bm{d} - \sigma \bm \beta$.
\end{lem}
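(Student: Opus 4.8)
The plan is to relate the posterior mean outputs $\bar{\bm s}^*$ to the coefficient vector $\bm\beta$ by reusing the adjoint identity (\ref{adjointout}), now applied to the \emph{deterministic} mean state $\bar u^*$ in place of the random state $u^*$ and to $\bar g^*$ in place of $g$. First I would reproduce the chain of equalities in (\ref{adjointout}) with these substitutions: testing the mean weak form (\ref{meanweakform}) against $\phi_i$ gives $\bar g^*(\phi_i) = \ell(\phi_i) - a(\bar u^*,\phi_i)$; the adjoint problem (\ref{adjoint}) with $v=\bar u^*$ gives $a(\bar u^*,\phi_i) = -c_i(\bar u^*) = -\bar s^*_i$; and the best knowledge model (\ref{eq6}) together with (\ref{adjoint}) at $v=u$ gives $\ell(\phi_i) = a(u,\phi_i) = -c_i(u) = -s_i$. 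Combining these three facts yields, for each $i$, the scalar identity $\bar s^*_i = s_i + \bar g^*(\phi_i)$, which is simply the mean analogue of $s^*_i - s_i = g(\phi_i)$.

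The second step is to express $\bar g^*(\phi_i)$ through $\bm\beta$. Evaluating the posterior mean functional (\ref{meang}) at the training adjoint state $\phi_i$ gives $\bar g^*(\phi_i) = \sum_{j=1}^M \beta_j\, k(\phi_j,\phi_i)$. Here the hypothesis that $k$ is a bilinear form, together with the symmetry $k(v,v')=k(v',v)$, is exactly what is needed: symmetry identifies $k(\phi_j,\phi_i)$ with the entry $K_{ij}(\Phi,\Phi)=k(\phi_i,\phi_j)$ of the covariance matrix, so that $\bar g^*(\phi_i) = (\bm K(\Phi,\Phi)\bm\beta)_i$. Stacking over $i=1,\ldots,M$ and inserting this into the scalar identity from the first step gives the vector relation $\bar{\bm s}^* = \bm s + \bm K(\Phi,\Phi)\bm\beta$.

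The final step is purely algebraic: eliminate $\bm K(\Phi,\Phi)\bm\beta$ using the normal equations (\ref{betaD}). Writing $\bm K(\Phi,\Phi)\bm\beta = \bm D\bm\beta - \bigl(\bm D - \bm K(\Phi,\Phi)\bigr)\bm\beta$ and using $\bm D\bm\beta = \bm d - \bm s$, the best knowledge outputs cancel upon adding $\bm s$, leaving $\bar{\bm s}^* = \bm d - \bigl(\bm D - \bm K(\Phi,\Phi)\bigr)\bm\beta$. Since $\bm D - \bm K(\Phi,\Phi)$ is precisely the diagonal noise-regularization term in (\ref{betaD}), this correction is a scalar multiple of $\bm\beta$ governed by the noise level, which is the claimed identity $\bar{\bm s}^* = \bm d - \sigma\bm\beta$.

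I expect the one genuinely delicate point to be this last substitution, specifically the bookkeeping of the noise factor that survives when $\bm K(\Phi,\Phi)\bm\beta$ is replaced using the definition of $\bm D$. Everything else is a short chain of substitutions built from the adjoint identity and the bilinearity and symmetry of $k$, so the only place where I would not treat the computation as routine is in tracking exactly how the noise level enters the diagonal term $\bm D - \bm K(\Phi,\Phi)$ and reconciling it with the $\sigma\bm\beta$ appearing in the statement; matching that factor precisely is the crux of the argument.
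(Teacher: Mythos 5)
Your proof is correct and takes essentially the same route as the paper's. The paper likewise derives $\bar{s}^*_i = c_i(\bar{u}^*) = -a(\bar{u}^*,\phi_i) = \sum_{j=1}^M k(\phi_j,\phi_i)\beta_j - \ell(\phi_i)$ from the adjoint equation (\ref{adjoint}) and the mean weak form (\ref{meanweakform2}), then uses $\ell(\phi_i) = a(u,\phi_i) = -c_i(u) = -s_i$, and finishes by eliminating $\bm K(\Phi,\Phi)\bm\beta$ against the normal equations; your detour through $\bar{g}^*(\phi_i)$ in (\ref{meang}) is the same computation, since $\bar{g}^*(v) = k(\bar{q}^*,v)$ by bilinearity. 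The one point you flag as delicate is in fact an inconsistency in the paper rather than a gap in your argument: the paper's proof writes the normal equations with diagonal term $\sigma\delta_{ij}$, whereas (\ref{betaD}) defines $\bm D = \bm K(\Phi,\Phi) + \sigma^2\bm I$, so with (\ref{betaD}) taken literally your chain yields $\bar{\bm s}^* = \bm d - \sigma^2\bm\beta$, and the stated $\sigma\bm\beta$ holds only under the $\sigma\bm I$ convention (the same $\sigma$-versus-$\sigma^2$ slippage appears in the constraint $c_i(z)+\sigma\gamma_i=d_i$ of Theorem 2 and in Table 1). Your algebra is sound under either convention; you could not have matched the factor precisely because the source itself does not fix it consistently.
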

\begin{proof}
We first note from the adjoint equation (\ref{adjoint}) and (\ref{meanweakform2}) that
\begin{equation}
\bar{s}^*_i = c_i(\bar{u}^*) = - a(\bar{u}^*, \phi_i) = \sum_{j=1}^M  k(\phi_j, \phi_i) \beta_j - \ell(\phi_i), \quad i = 1,\ldots, M  .
\end{equation}
We next recall that $\bm \beta$ satisfies 
\begin{equation}
\sum_{j=1}^M (k(\phi_j, \phi_i) + \sigma \delta_{ij}) \beta_j = d_i - s_i, \quad i = 1,\ldots, M ,
\end{equation}
where  $\delta_{ij}$ is the Kronecker delta. Moreover, we obtain from the best knowledge model (\ref{eq6}) and the adjoint equation (\ref{adjoint}) that
\begin{equation}
\ell(\phi_i) =  a(u, \phi_i) = - c_i(u) = - s_i, \quad i = 1,\ldots, M    .
\end{equation}
The desired result immediately follows from the above three equations. This completes the proof. 
\end{proof}

This lemma shows that the mean outputs differs from the observed data by the product of the noise level $\sigma$ and the coefficient vector $\bm \beta$. When the observed data is noise-free (namely, $\sigma = 0$) we have that the mean output vector is exactly equal to the observed data. Henceforth,  whenever $M$ is sufficiently large and $\sigma$ is relatively small, we expect that our method will yield a much better estimate of the true state than the best knowledge model. The following theorem shows the optimality of the mean state.

\begin{thm}
Assume that the covariance operator $k(\cdot,\cdot)$ is a bilinear form.  Then we have $(\bar{u}^*,\bar{q}^*,\bm \beta) = (u^{\rm o},q^{\rm o},\bm \beta^{\rm o})$, where
\begin{equation}
\label{eq8}
\begin{split}
(u^{\rm o},q^{\rm o},\bm \beta^{\rm o}) = & \arg \min_{z \in V, q \in V, \bm \gamma \in \mathbb{R}^M} \frac{1}{2}k(q,q) + \frac{1}{2} \sigma \bm \gamma^T \bm \gamma \\
& \qquad \ \mathrm{s.t. } \ \  a(z, v) + k(q,v)  = \ell(v),  \quad \forall v \in V, \\
& \qquad \quad \quad \ c_i(z) + \sigma \gamma_i = d_i, \quad i = 1,\ldots, M \ .
\end{split}
\end{equation}
\end{thm}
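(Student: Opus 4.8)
The plan is to read \eqref{eq8} as a convex equality-constrained quadratic program and to establish the claimed identity by verifying that the posterior-mean triple $(\bar u^*,\bar q^*,\bm \beta)$ satisfies the associated first-order (Karush--Kuhn--Tucker) optimality conditions. Since the objective is convex and the constraints are affine, those conditions are both necessary and sufficient for global optimality, so exhibiting them for $(\bar u^*,\bar q^*,\bm\beta)$ identifies this triple with the minimizer $(u^{\rm o},q^{\rm o},\bm\beta^{\rm o})$.

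First I would form the Lagrangian, introducing a multiplier $p\in V$ for the functional (PDE) constraint and multipliers $\bm\eta\in\mathbb{R}^M$ for the scalar constraints:
\[
\mathcal{L} = \tfrac12 k(q,q) + \tfrac12 \sigma\,\bm\gamma^T\bm\gamma + \big[a(z,p)+k(q,p)-\ell(p)\big] + \sum_{i=1}^M \eta_i\big(c_i(z)+\sigma\gamma_i-d_i\big).
\]
Taking variations gives the stationarity conditions: the variation in $z$ (direction $w\in V$) yields $a(w,p)+\sum_i\eta_i c_i(w)=0$; the variation in $q$ yields $k(q,w)+k(p,w)=0$; and the variation in each $\gamma_i$ yields $\sigma\gamma_i+\sigma\eta_i=0$. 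Using the adjoint equation \eqref{adjoint} together with the symmetry and positive-definiteness of $k$, these reduce respectively to $p=\sum_i\eta_i\phi_i$, to $q=-p$, and to $\gamma_i=-\eta_i$.

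Next I would check primal feasibility of the candidate and then supply compatible multipliers. The first (functional) constraint is exactly the mean weak form \eqref{meanweakform2} evaluated at $(z,q)=(\bar u^*,\bar q^*)$, namely $a(\bar u^*,v)+k(\bar q^*,v)=\ell(v)$; the second constraint is precisely the conclusion of the preceding Lemma, $c_i(\bar u^*)=\bar s^*_i=d_i-\sigma\beta_i$, so it holds with $\bm\gamma=\bm\beta$. It then remains to choose multipliers satisfying stationarity: setting $\eta_i=-\beta_i$ and $p=-\bar q^*=-\sum_i\beta_i\phi_i$ is consistent with all three relations, since $\gamma_i=-\eta_i=\beta_i$ and $q=-p=\bar q^*$, while the $z$-variation collapses, via \eqref{adjoint}, to $a\big(w,\,p+\sum_i\beta_i\phi_i\big)=a(w,0)=0$ and is therefore automatically met.

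Finally, because the objective $\tfrac12 k(q,q)+\tfrac12\sigma\bm\gamma^T\bm\gamma$ is convex (the bilinear form $k$ is positive-definite and $\sigma\ge 0$) and the constraints are affine, the KKT conditions just verified are sufficient for global optimality; invertibility of $a$ and positive-definiteness of $k$ then pin down $(z,q)$ uniquely, forcing $(u^{\rm o},q^{\rm o},\bm\beta^{\rm o})=(\bar u^*,\bar q^*,\bm\beta)$. I expect the main obstacle to be the careful treatment of the Lagrange multiplier for the infinite-dimensional functional constraint---justifying that a single $p\in V$ suffices, differentiating the bilinear terms correctly, and keeping sign conventions consistent so the recovered multipliers match $\bm\beta$ exactly---while the feasibility checks are immediate consequences of \eqref{meanweakform2} and the Lemma.
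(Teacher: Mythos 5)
Your argument is essentially the paper's own: both form the Lagrangian of the convex quadratic program, derive the stationarity conditions, and use the adjoint equation together with symmetry and positive-definiteness of $k$ to identify the multiplier $p$ with $\pm\sum_i\beta_i\phi_i$ and the optimal $\bm\beta^{\rm o}$ with $\bm\beta$. The only (harmless) difference is direction: the paper solves the KKT system forward and recovers the linear system \eqref{betaD} for $\bm\beta^{\rm o}$, whereas you verify the candidate $(\bar u^*,\bar q^*,\bm\beta)$ directly---using \eqref{meanweakform2} and Lemma~1 for feasibility---and then invoke convexity for sufficiency, which if anything makes the sufficiency step more explicit than in the paper.
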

\begin{proof}
We introduce the Lagrangian 
\begin{equation}
\mathcal{L}(q,z,\bm \gamma, p,\bm \varrho) = \frac{1}{2}k(q,q) + \frac{1}{2} \sigma \bm \gamma^T \bm \gamma - a(z, p) - k(q,p)  + \ell(p) - \sum_{i=1}^M \varrho_i (c_i(z) + \sigma \gamma_i - d_i),
\end{equation}
where $p \in V$ and $\bm \varrho \in \mathbb{R}^M$ are the Lagrange multipliers of the constraints. The optimal solution $(q^{\rm o},u^{\rm o},\bm \beta^{\rm o}, p^{\rm o},\bm \varrho^{\rm o})$ satisfies
\begin{equation}
\frac{\partial \mathcal{L}}{\partial q} = \frac{\partial \mathcal{L}}{\partial z} =  \frac{\partial \mathcal{L}}{\partial \bm \gamma}  = \frac{\partial \mathcal{L}}{\partial p} =  \frac{\partial \mathcal{L}}{\partial \bm \varrho} = 0,
\end{equation}
which yields
\begin{subequations}
\label{KKT}
\begin{alignat}{2}
k(q^{\rm o},v) - k( p^{\rm o},v)  & = 0,  \quad \forall v \in V, \\[1ex]
a(v, p^{\rm o})  + \sum_{i=1}^M \varrho^{\rm o}_i c_i(v)  & = 0,  \quad \forall v \in V, \\[1ex]
\sigma \beta_i^{\rm o} - \sigma \varrho^{\rm o}_i & = 0 , \quad i = 1,\ldots, M,\\[1ex]
a(u^{\rm o}, v) + k(q^{\rm o},v) - \ell(v) & = 0,  \quad \forall v \in V, \\[1ex]
c_i(u^{\rm o}) + \sigma \beta_i^{\rm o} & = d_i , \quad i = 1,\ldots, M\ .
\end{alignat}
\end{subequations}
Note that  when taking the partial derivatives we have used the assumption that $k$ is bilinear. The first two equations of (\ref{KKT}) yield that
\begin{equation}
\label{proof46}
q^{\rm o} = p^{\rm o} = \sum_{i=1}^M \varrho^{\rm o}_i \phi_i ,
\end{equation}
where $\phi_i, i = 1,\ldots,M$ are the adjoint states. And the third equation (\ref{KKT}c) gives
\begin{equation}
\label{betarho}
\beta_i^{\rm o} = \varrho^{\rm o}_i,\quad i = 1,\ldots, M . 
\end{equation}
Therefore, if we can show that $\bm \beta^{\rm o} = \bm \beta$ then (\ref{KKT}d) and (\ref{proof46}) imply that $(\bar{u}^*,\bar{q}^*) = (u^{\rm o},q^{\rm o})$. To this end, we note from (\ref{KKT}d) and the adjoint equation (\ref{adjoint}) that
\begin{equation}
\label{proof47}
c_i(u^{\rm o}) = - a(u^{\rm o}, \phi_i) = k(q^{\rm o}, \phi_i) - \ell(\phi_i), \quad i = 1,\ldots, M  .
\end{equation}
Moreover, we obtain from the best knowledge model (\ref{eq6}) and the adjoint equation (\ref{adjoint}) that
\begin{equation}
\label{proof48}
\ell(\phi_i) =  a(u, \phi_i) = - c_i(u) = - s_i, \quad i = 1,\ldots, M    .
\end{equation}
Finally, it follows from (\ref{KKT}e), (\ref{proof46}), (\ref{betarho}), (\ref{proof47}), and (\ref{proof48})  that
\begin{equation}
 \sum_{j=1}^m  \left( k(\phi_i,\phi_j) + \sigma \delta_{ij} \right) \beta^{\rm o}_j = d_i - s_i, \quad i = 1,\ldots,M  ,
\end{equation}
which implies that $\bm \beta^{\rm o} = \bm \beta$. This completes the proof.
\end{proof}

This theorem establishes a connection between functional GP regression and traditional least-squares regression when the covariance operator is bilinear. In particular, the posterior mean state $\bar{u}^*$ is the optimal solution of a least-squares minimization. This is hardly a surprise as the posterior mean  state is also the maximum a posteriori (MAP) estimate of linear functional regression model in the Bayesian framework discussed in the Appendix A. It is well known that the MAP estimate coincides with the least-squares solution. The main advantage of our approach over least-squares regression is that we can compute not only the posterior mean state but also the posterior covariance. Another advantage of our approach is that it allows us to choose a covariance operator based on the observed data by exploiting the marginal likelihood function, whereas least-squares regression does not provide a mechanism to optimally set the prior covariance operator.

\section{A Simple Heat Conduction Example}

\subsection{Problem description}

For the true PDE model we consider a one-dimensional heat equation:
\begin{equation}
\label{heatequation}
-\frac{\partial^2 u^\true}{\partial x^2} = f^{\true},  \quad \mbox{in } \Omega^{\true} \equiv (-1,1),
\end{equation}
with Dirichlet boundary conditions $u^\true(-1) = u^\true(1) = 0$. The function space $V^{\true}(\Omega^{\true})$ is then given by
\begin{equation}
V^{\true}(\Omega^{\true}) = \left\{v \ : \ \int_{\Omega^\true} \left(v^2 + \frac{\partial v}{\partial x} \frac{\partial v}{\partial x} \right) dx < \infty  \mbox{ and } v(-1) = v(1) = 0\right\} .
\end{equation}
The true state $u^\true \in V^{\true}(\Omega^{\true})$ satisfies 
\begin{equation}
a^\true(u^\true, v) = \ell^\true(v), \quad \forall v \in V^{\true}(\Omega^{\true}),
\end{equation}
where
\begin{equation}
a^\true(w,v) = \int_{\Omega^\true} \frac{\partial w}{\partial x} \frac{\partial v}{\partial x} dx, \quad \ell^\true(v) =  \int_{\Omega^\true} f^{\true} v dx, \quad \forall w,v \in V^{\true}(\Omega^{\true}) .
\end{equation}
We prescribe a {\em synthetic source term}  as $f^\true = \sin(\pi x) + 4 \sin(4\pi x)$. It is easy to see that
\begin{equation}
u^\true =  \frac{\sin(\pi x)}{\pi^2}  + \frac{\sin(4\pi x)}{4\pi^2}  .
\end{equation}
The true state is unknown to us and will serve to assess the performance of our method.

We next assume that we know almost everything about the true model except for the source term $f^\true$ and the boundary data. 
We introduce a function space $V(\Omega)$ with $\Omega = (-1,1)$ as
\begin{equation}
\label{spaceV}
V(\Omega) = \left\{v \ : \ \int_{\Omega} \left(v^2 + \frac{\partial v}{\partial x} \frac{\partial v}{\partial x} \right) dx < \infty  \mbox{ and } v(-1) = b_1, v(1) = b_2\right\} ,
\end{equation}
where the boundary data $b_1$ and $b_2$ will be determined from the observed data. We then define our best knowledge model: find $u \in V(\Omega)$ such that 
\begin{equation}
\label{best knowledgemodel}
a(u, v) = \ell(v), \quad \forall v \in V(\Omega),
\end{equation}
where
\begin{equation}
\label{best knowledgemodel2}
a(w,v) = \int_{\Omega} \frac{\partial w}{\partial x} \frac{\partial v}{\partial x} dx, \qquad \ell(v)  = \int_{\Omega} f v dx, \quad \forall w,v \in V(\Omega) ,
\end{equation}
In practice, we replace the continuous space $V(\Omega)$ with a discrete counterpart, for this problem, a 2000-element linear finite element space.

\subsection{Model specifications}

We now specify the observation functionals $c_i(v) = \int_{\Omega} \delta(x_i) v d x = v(x_i), i = 1,\ldots,M$, where $\delta(x)$ is the Dirac delta function and the $x_i$ are the extended Chebyshev nodes [refs] in the interval $[-1,1]$:
\begin{equation}
x_i = -\frac{\cos((2i-1)\pi/(2M))}{\cos(\pi/(2M))} , \quad i = 1,\ldots, M.  
\end{equation}
These functionals correspond to pointwise observations taken at the points $x_i$. Note that the set of measurement points $\{x_i\}_{i=1}^M$ varies with $M$. Note also that the finite element mesh is designed to include $\{x_i\}_{i=1}^M$ in its grid points.

We shall assume that the observations are noise-free, that is we have  $\sigma = 0$ and $d_i = u^\true(x_i), i = 1,\ldots,M$. Since the observations at $x_1$ and $x_{M}$ are used to define the function space $V(\Omega)$ in (\ref{spaceV}) (that is we set $b_1 = d_1 = 0$ and $b_2 = d_M = 0$), we can only use the remaining $(M-2)$ observations to construct the training set as
\begin{equation}
\label{TII}
T = \{(\phi_i,d_i - s_{i}), \ i = 2,\ldots,M-1\} ,
 \end{equation}
where $\phi_i \in V(\Omega), i = 2,\ldots,M-1$ satisfies
\begin{equation}
a(v,\phi_i) = -c_i(v), \quad \forall v \in V(\Omega) ,
\end{equation}
and $s_{i} = u(x_i), i = 2,\ldots,M-1$ are the outputs of the best knowledge model. Hence, the training set has only $(M-2)$ samples.  Furthermore, we use a bilinear covariance operator of the form
\begin{equation}
k(w,v; \bm \theta) = \int_{\Omega} \left(\theta_1 wv + \theta_2 \frac{\partial w}{\partial x} \frac{\partial v}{\partial x} \right) dx ,
\end{equation}
The parameters $\bm \theta = (\theta_1,\theta_2)$ are determined by maximizing the log marginal likelihood (\ref{LML}).

We will compare our method to the standard Gaussian process regression described in Section 3 which ignores the best knowledge model and utilizes only the data. To this end, we employ a squared-exponential covariance function of the form
\begin{equation}
\kappa(x,x'; \bm \zeta) =  \zeta_1^2 \exp(- \frac{1}{2\zeta_2^2}(x-x')^2) ,
\end{equation}
where $\zeta_1$ represents the signal variance, while $\zeta_2$ represents the length scale. These parameters  are set by maximizing the log marginal likelihood (\ref{LMLGP}).

\subsection{Results and discussions}

\begin{figure}[thbp]
    \centering
        \includegraphics[scale=0.5]{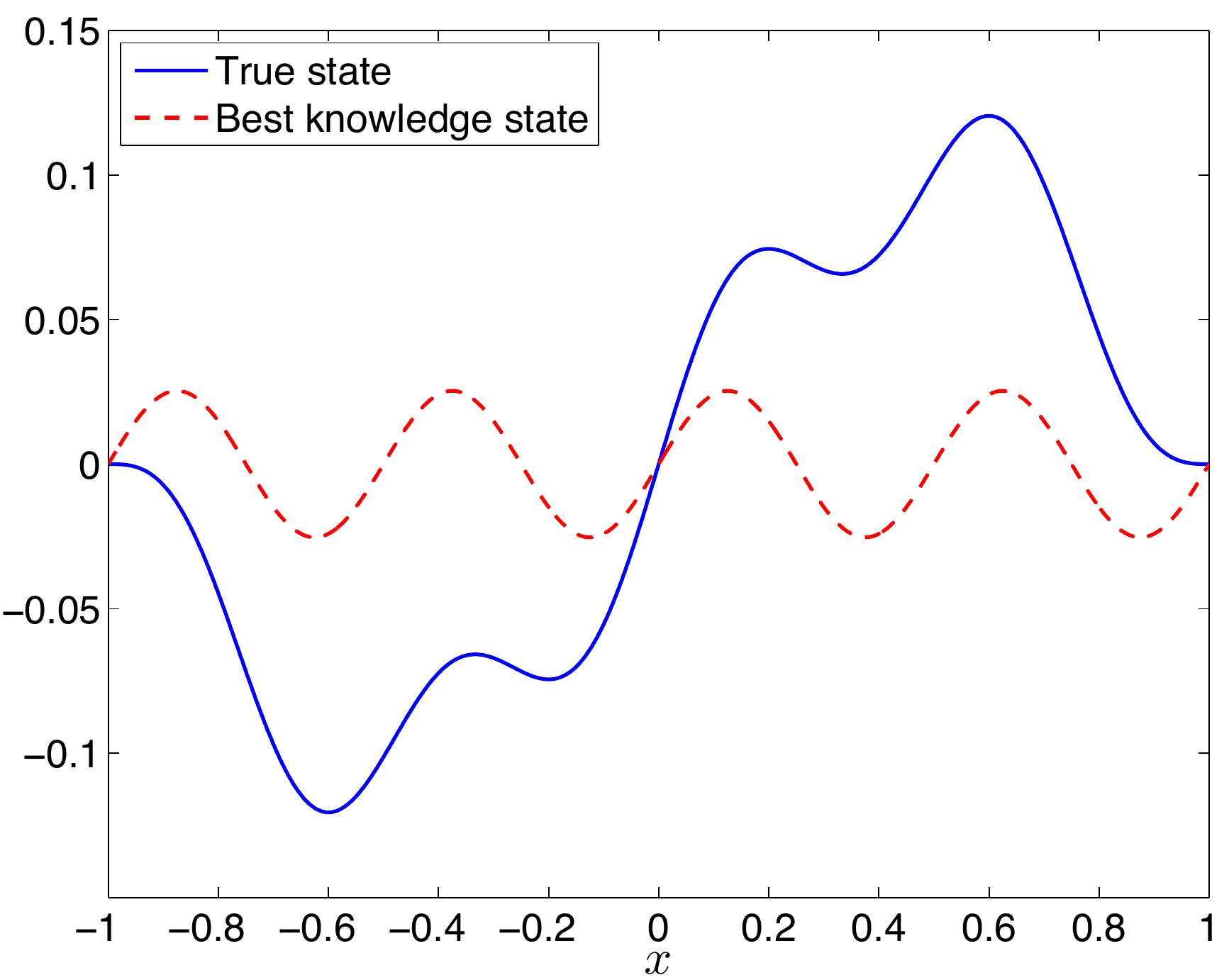}
    \caption{Plots of the true state and the best knowledge state.}
    \label{fig10}
\end{figure}

\begin{table}[hp]
  \begin{center}
\scalebox{0.95}{%
    $\begin{array}{|c||c|c|c||c|c|c|}
    \hline
&  \multicolumn{3}{|c||}{\mbox{Functional GP}} & \multicolumn{3}{|c|}{\mbox{Standard GP}}\\
\hline
M & (\theta_1,\theta_2) & ||u^\true - \bar{u}^*||_{\Omega} & ||\sqrt{\mbox{var}(u^*)}||_{\Omega} & (\zeta_1,\zeta_2)  & ||u^\true - \bar{h}^*||_{\Omega} & ||\sqrt{\mbox{var}(h^*)}||_{\Omega} \\
\hline
  4  &  (0.485 ,0) &  4.43\mbox{E-}3  &  4.64\mbox{E-}2  &  (0.053, 0.064)  &  9.14\mbox{E-}2  &  6.84\mbox{E-}2  \\  
  5  &  (0.319 ,0) &  6.02\mbox{E-}3  &  2.47\mbox{E-}2  &  (0.076, 0.005)  &  8.12\mbox{E-}2  &  9.73\mbox{E-}2  \\  
  6  &  (0.247 ,0) &  1.41\mbox{E-}3  &  1.60\mbox{E-}2  &  (0.062, 0.046)  &  8.68\mbox{E-}2  &  7.84\mbox{E-}2  \\  
  7  &  (0.199 ,0) &  9.06\mbox{E-}4  &  1.12\mbox{E-}2  &  (0.086, 0.621)  &  3.13\mbox{E-}2  &  4.40\mbox{E-}4  \\  
  8  &  (0.167 ,0) &  4.16\mbox{E-}4  &  8.31\mbox{E-}3  &  (0.074, 0.306)  &  3.64\mbox{E-}2  &  8.36\mbox{E-}3  \\  
  9  &  (0.143 ,0) &  2.38\mbox{E-}4  &  6.42\mbox{E-}3  &  (0.067, 0.203)  &  3.25\mbox{E-}2  &  2.05\mbox{E-}2  \\  
  10  &  (0.125 ,0) &  1.42\mbox{E-}4  &  5.12\mbox{E-}3  &  (0.064, 0.186)  &  9.39\mbox{E-}3  &  1.85\mbox{E-}2  \\  
  11  &  (0.111 ,0) &  9.10\mbox{E-}5  &  4.18\mbox{E-}3  &  (0.062, 0.223)  &  1.90\mbox{E-}2  &  7.00\mbox{E-}3  \\  
  12  &  (0.100 ,0) &  6.10\mbox{E-}5  &  3.48\mbox{E-}3  &  (0.065, 0.193)  &  9.24\mbox{E-}4  &  9.43\mbox{E-}3  \\  
  13  &  (0.091 ,0) &  4.25\mbox{E-}5  &  2.94\mbox{E-}3  &  (0.061, 0.196)  &  4.57\mbox{E-}3  &  5.96\mbox{E-}3  \\  
  14  &  (0.084 ,0) &  3.06\mbox{E-}5  &  2.52\mbox{E-}3  &  (0.064, 0.200)  &  4.24\mbox{E-}4  &  3.87\mbox{E-}3  \\  
  15  &  (0.077 ,0) &  2.26\mbox{E-}5  &  2.18\mbox{E-}3  &  (0.065, 0.205)  &  1.58\mbox{E-}3  &  2.22\mbox{E-}3  \\  
 \hline
 \end{array} $
}
\end{center}{$\phantom{|}$}
\caption{The optimal hyperparameters, the $L^2(\Omega)$ norm of the prediction error ($u^\true(x) - \bar{u}^*(x)$ in our method and $u^\true(x) - \bar{h}^*(x)$ in the standard GP regression), and the $L^2(\Omega)$ norm of the standard deviation function ($\sqrt{\mbox{var}({u}^*(x))}$ in our method and $\sqrt{\mbox{var}({h}^*(x))}$ in the standard GP regression) as a function of $M$ for both functional GP regression and standard GP regression. Here $\bar{h}^*(x)$ and $\mbox{var}({h}^*(x))$ are the mean prediction and the posterior variance of $u^\true(x)$ for the standard GP regression.}
   \label{tab3}
\end{table}

We consider $f = 4\sin(4\pi x)$ for the best knowledge model (\ref{best knowledgemodel})-(\ref{best knowledgemodel2}). This yields the best knowledge state $u = {\sin(4\pi x)}/({4\pi^2})$. Figure \ref{fig10} shows the true state $u^\true$ and the best knowledge state $u$. We observe that $u$ is considerably different from $u^\true$. Therefore, the best knowledge model does not produce a good prediction of the heat equation (\ref{heatequation}). We now apply functional GP regression to this example and present numerical results to demonstrate the performance of our method relative to the standard GP regression.

We present in Table \ref{tab3} the optimal hyperparameters, the $L^2(\Omega)$ norm of the prediction error,  and the $L^2(\Omega)$ norm of the posterior standard deviation (the square root of the posterior variance) for our method and the standard GP regression. Here the $L^2(\Omega)$ norm of a function $v$ is defined as $||v||_{\Omega} =  (\int_{\Omega} v^2 d x)^{1/2}$. We observe that while $\theta_2$ is always zero, $\theta_1$ decreases as $M$ increases, indicating that the prediction uncertainty is reduced as the number of observations increases. We also note that the length scale $\zeta_2$ of the squared-exponential covariance function is relatively small for $M \le 6$, indicating that the training set may be inadequate for the standard GP regression to produce a good prediction. 

We see from Table \ref{tab3}  that the prediction error in our method converges significantly faster than that in the standard GP regression as $M$ increases. Therefore, our method requires fewer observations to achieve the same accuracy. In particular,  our method with $4$ observations has slightly smaller error than the standard GP regression with $13$ observations. This is made possible because our method uses both the best knowledge model and the observations to do regression on the space of functionals, whereas the standard GP progression uses the observations only to do regression on the space of functions. We also observe that the posterior standard deviation (measured in $L^2(\Omega)$ norm) shrinks with increasing $M$ albeit at a slower rate than the prediction error, indicating that our posterior variance of the prediction error is rigorous. In contrast, the standard GP regression has the posterior standard deviation even smaller than the prediction error for small values of $M$, indicating that the posterior variance of the standard GP regression may not be rigorous when the training set is inadequate. This can be attributed to the fact the standard GP regression requires a large enough set of observations to provide accurate prediction and rigorous error estimation.

Finally, we show in Figure \ref{fig12} the true state, the mean prediction, and  the 95\% confidence region (shaded area) for our method (left panels) and the standard  GP regression (right panels). Here the 95\% confidence region is an area bounded by the mean prediction plus and minus two times the standard deviation function. Note that the prediction error is zero at the measurement points, which is consistent with the theoretical result stated in Lemma 1. Moreover, the standard deviation function is also zero at the measurement points --- a consequence of the fact that the prediction error is zero at those points. We see that our method does remarkably well even with just 4 observations when it is compared to the standard GP regression. For our method  the true state $u^\true$ resides in the 95\% confidence region which shrinks rapidly with as $M$ increases, whereas for the standard GP regression $u^\true$ does not always reside in the 95\% confidence region. Indeed, as seen in  Figure \ref{fig12}(d), the standard GP regression gives poor prediction and erroneous 95\% confidence region for $M = 8$. Although the standard GP regression provides more accurate prediction and rigorous 95\% confidence region for $M = 12$, it is still not as good as our method. In summary, the numerical results obtained for this simple example show that our method outperforms the standard GP regression.


\begin{figure}[thbp]
    \centering
        \includegraphics[scale=0.4]{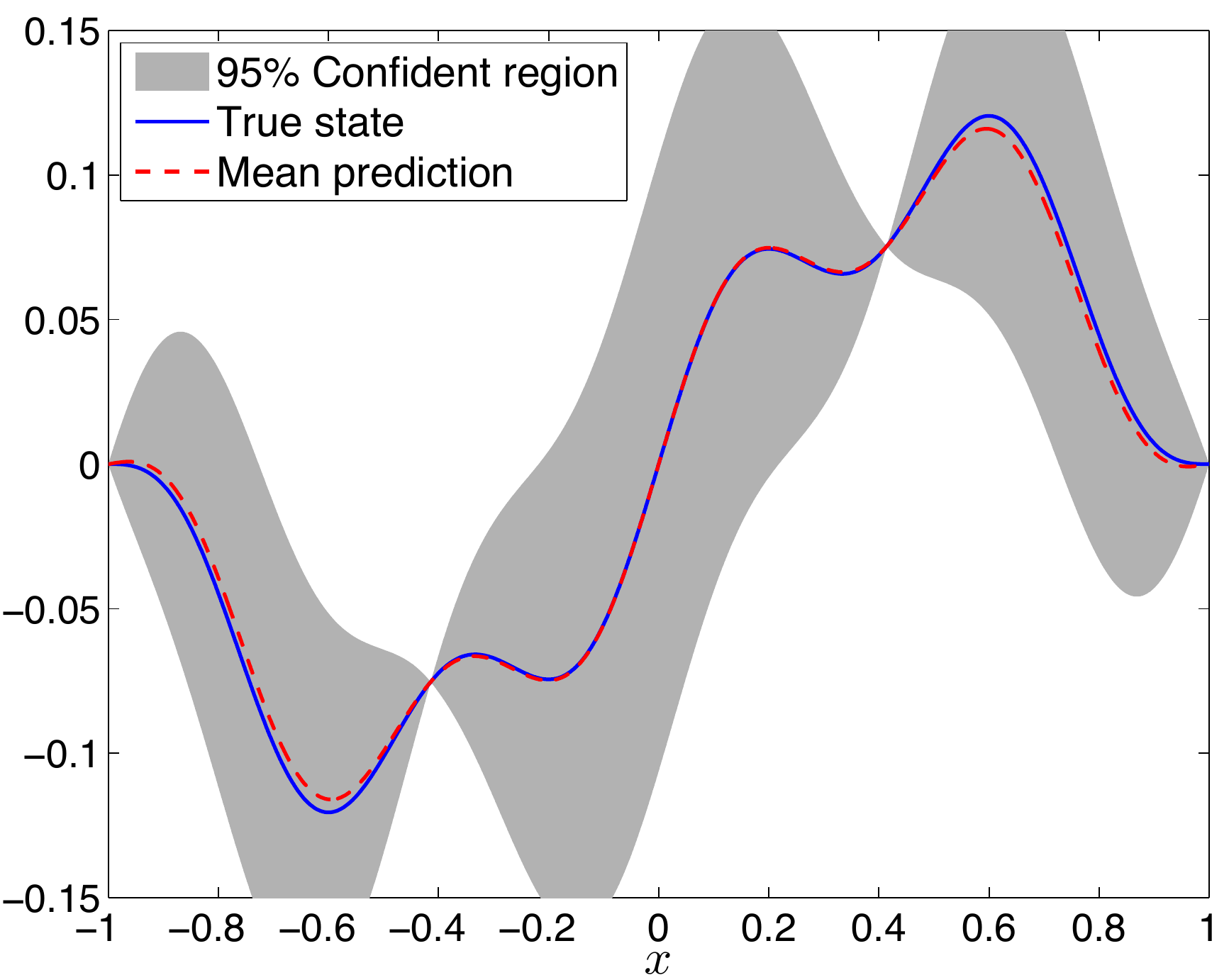} \qquad
	\includegraphics[scale=0.4]{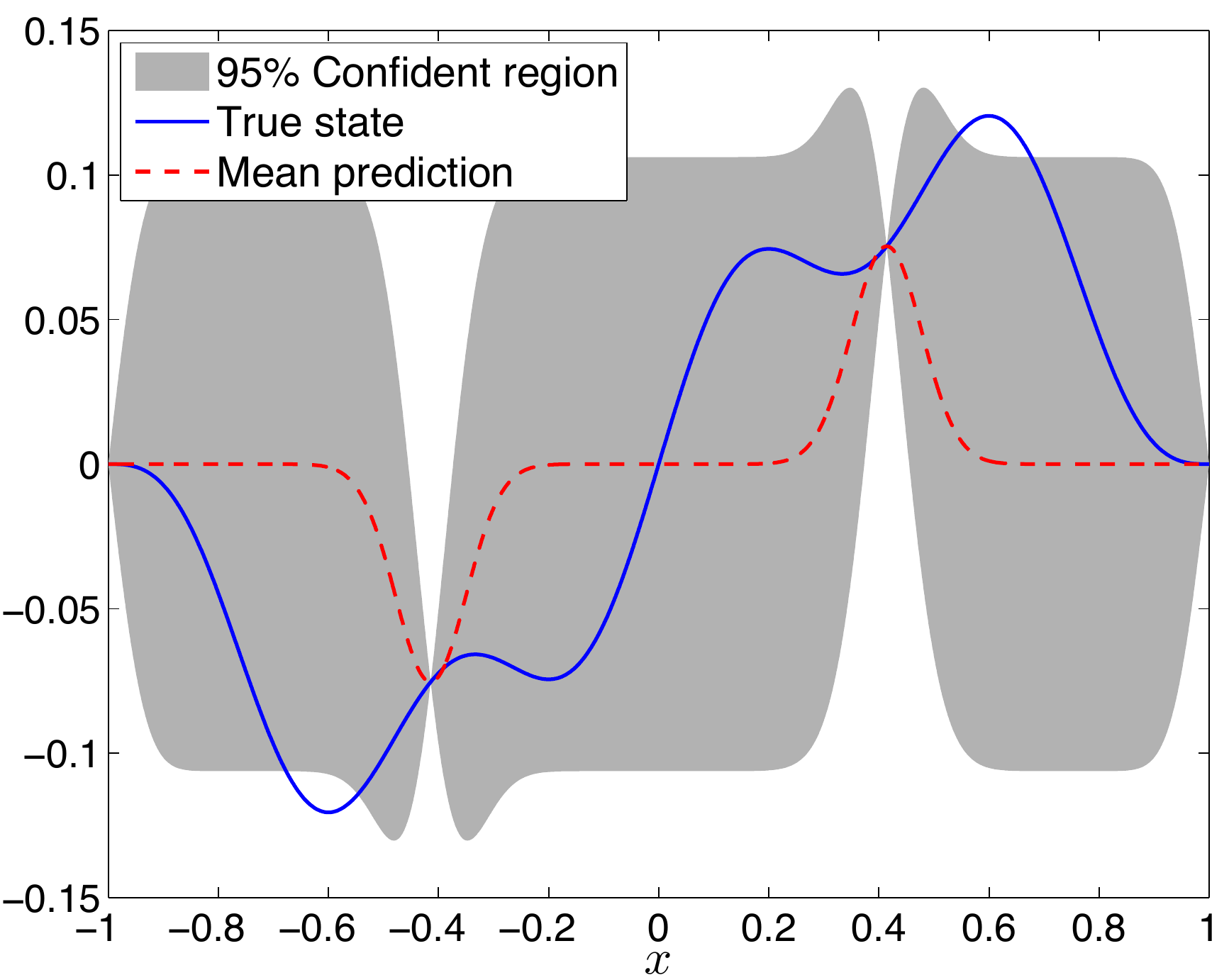} \\
(a) Functional GP for $M=4$ \hspace{2.5cm} (b) Standard GP for $M=4$  \\[1ex]
	        \includegraphics[scale=0.4]{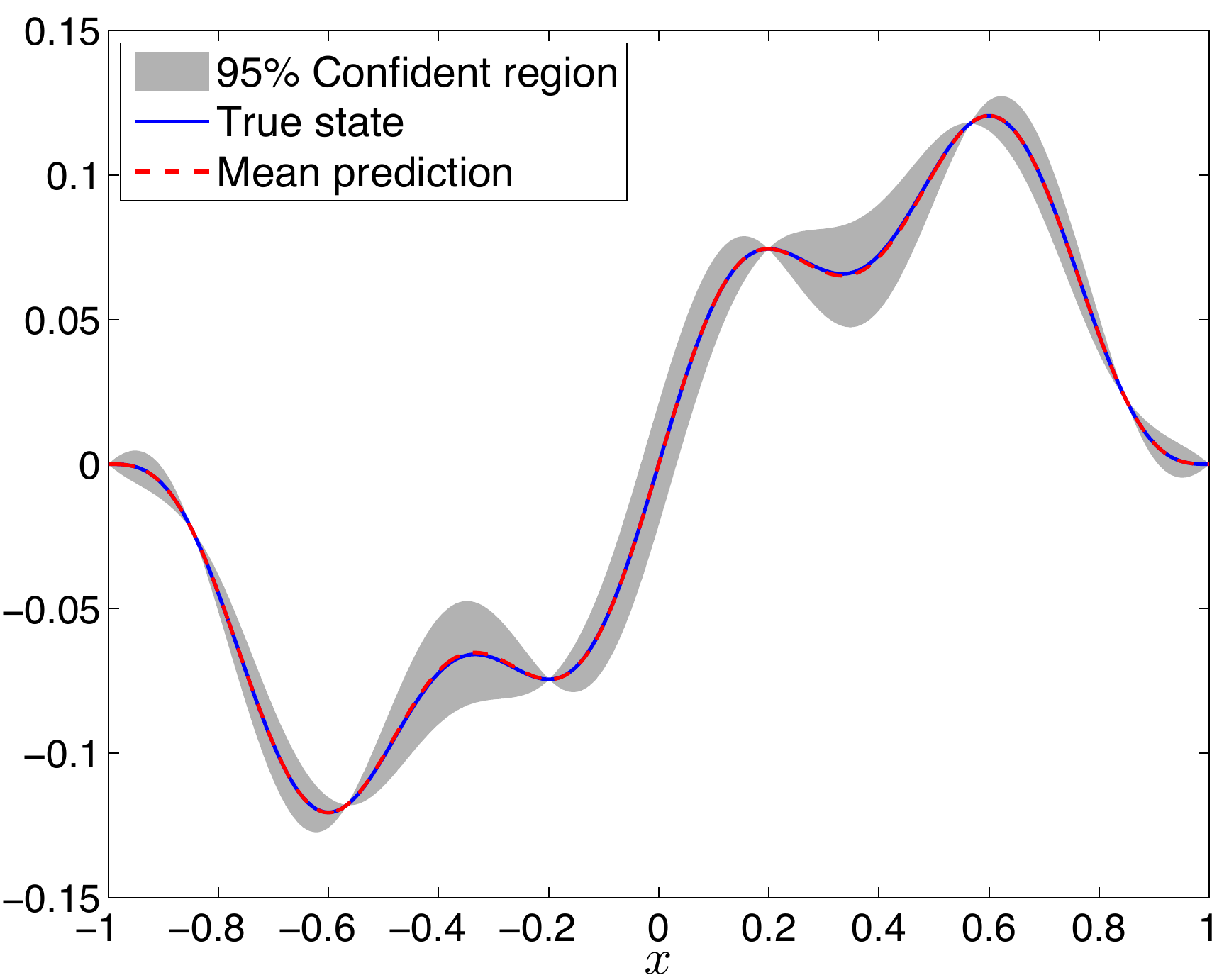} \qquad
	\includegraphics[scale=0.4]{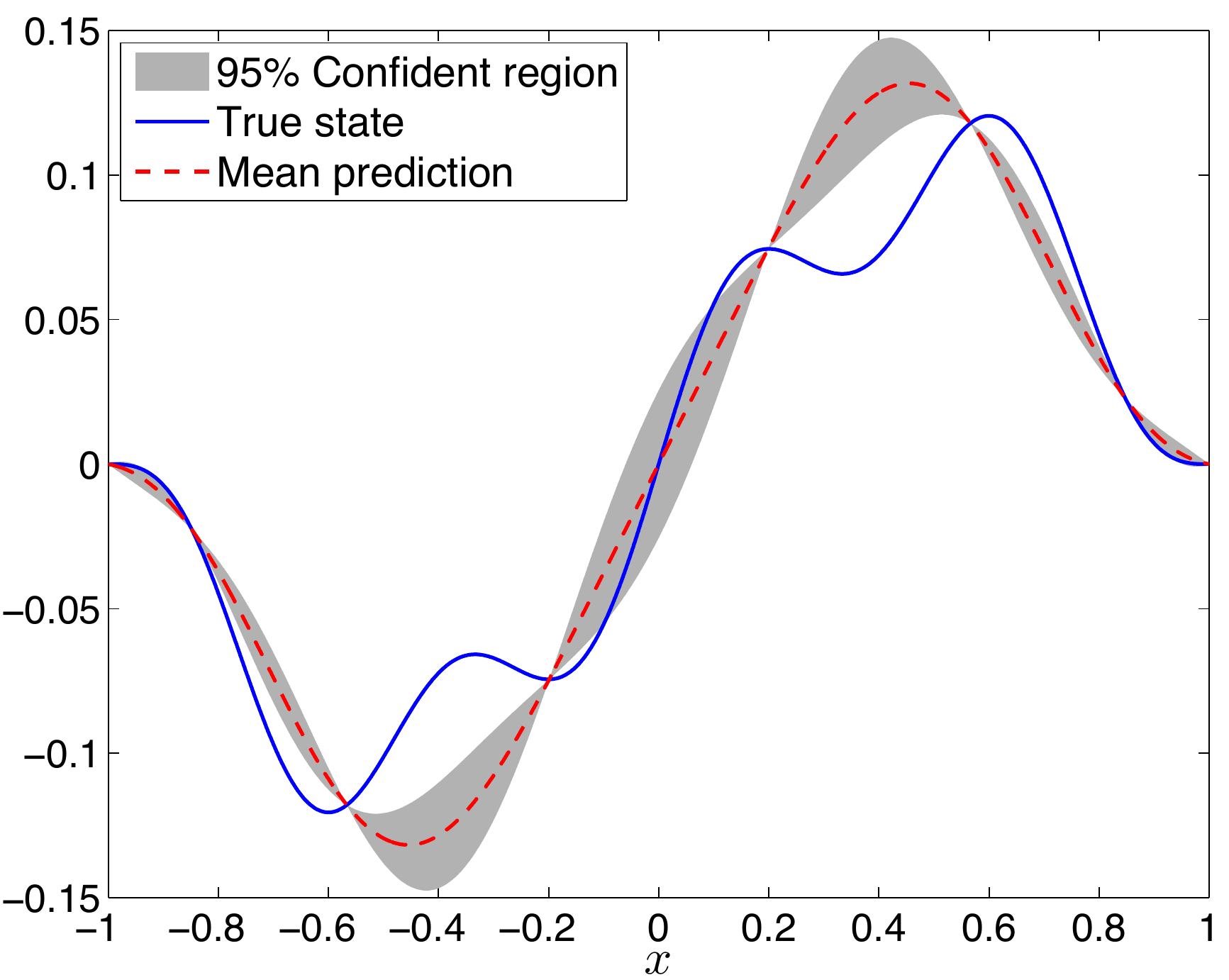} \\
(c) Functional GP for $M=8$ \hspace{2.5cm} (d) Standard GP for $M=8$  \\	[1ex]
	        \includegraphics[scale=0.4]{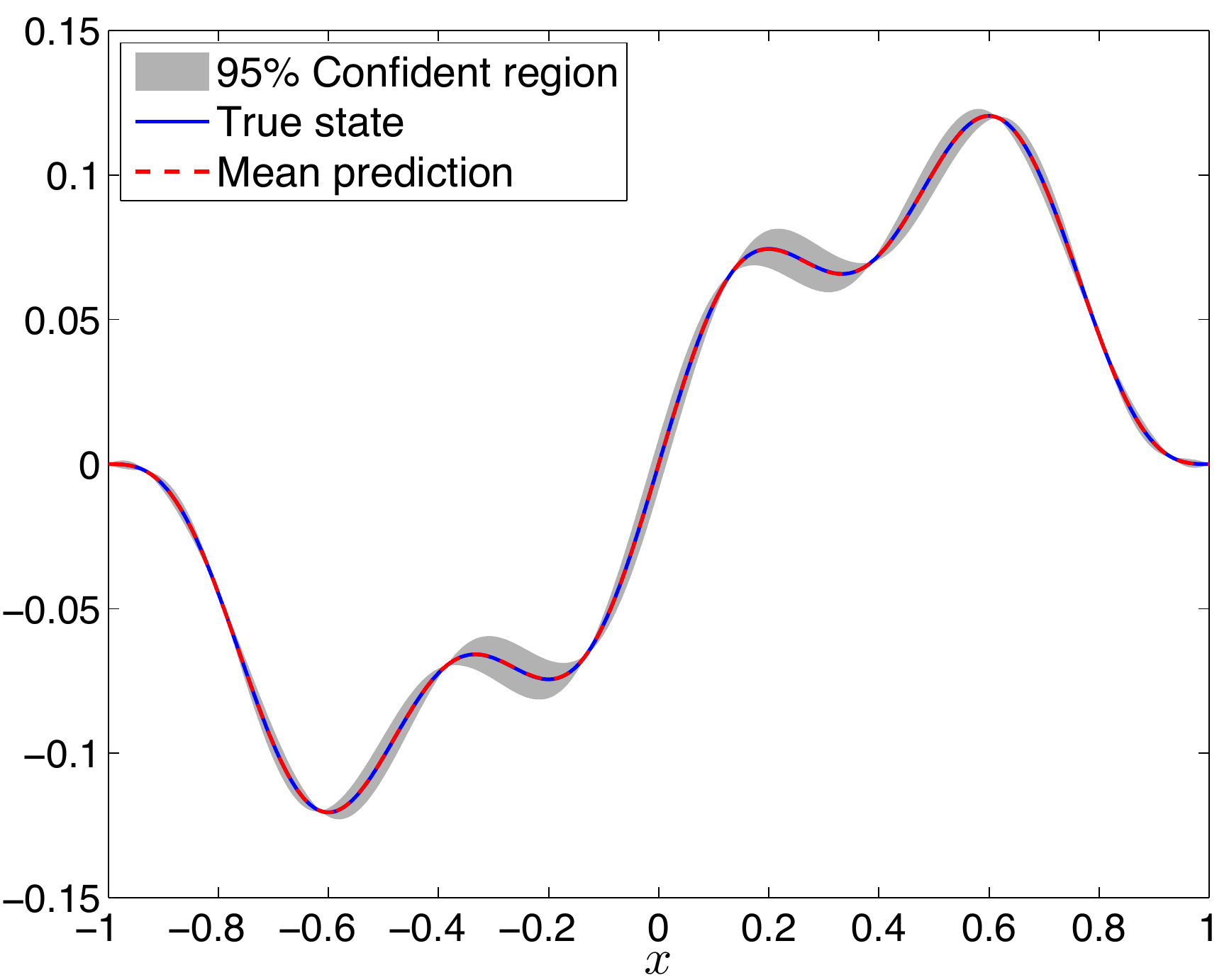} \qquad
	\includegraphics[scale=0.4]{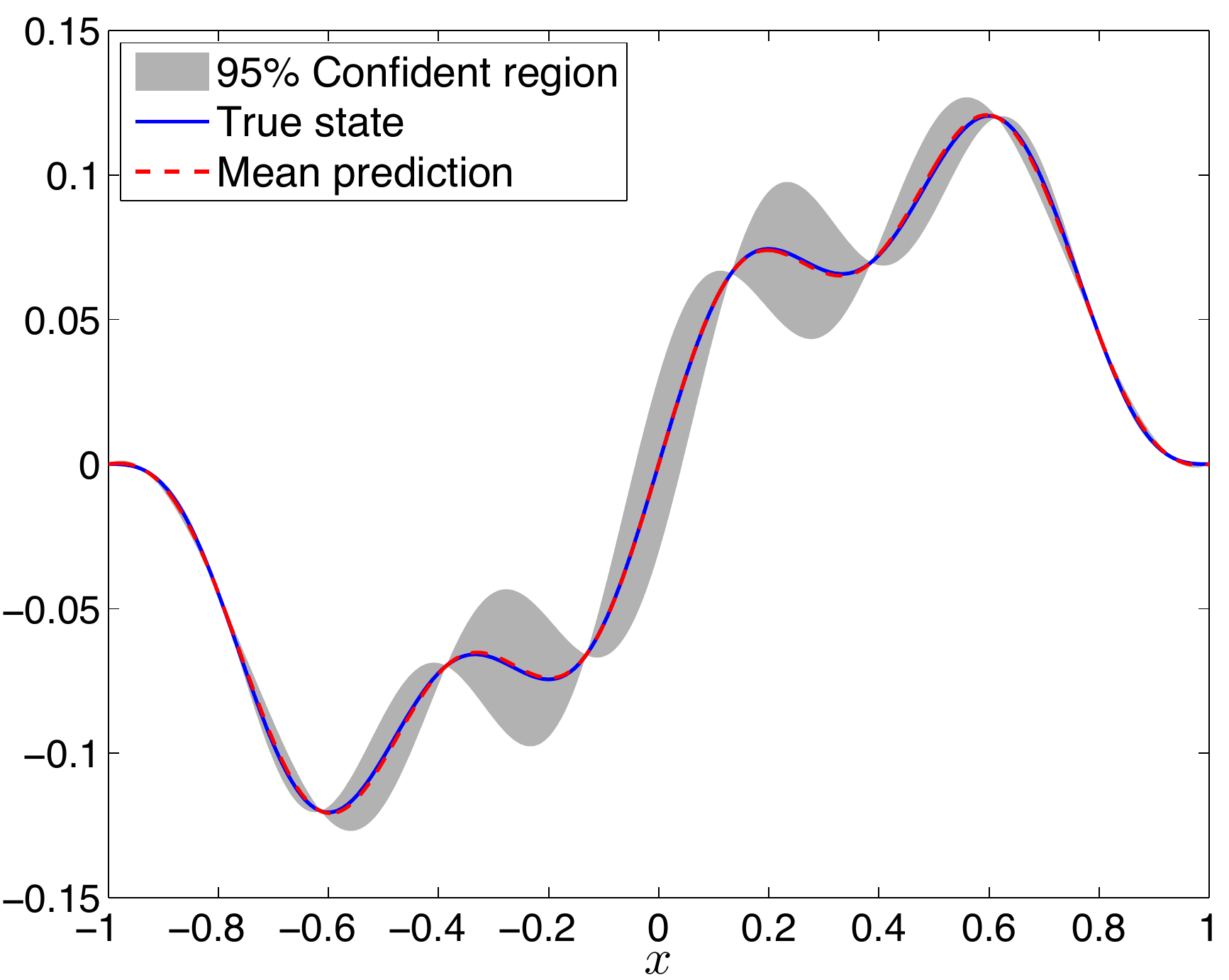} \\
(e) Functional GP for $M=12$ \hspace{2.5cm} (f) Standard GP for $M=12$  \\	
    \caption{Panels show the true state and the mean prediction obtained using $M = 4, 8, 12$ observations for both functional GP regression (left) and the standard GP regression (right). In these plots the shaded area represents the mean prediction plus and minus two times the standard deviation function  (corresponding to the 95\% confidence region).}
    \label{fig12}
\end{figure}

\section{Conclusions}

In this paper, we have presented a new statistical approach to the problem of combining the observed data with a mathematical model to improve our prediction of a physical system.  A new {\em functional Gaussian process} regression approach is presented, which has its root in Gaussian processes for functions. Our approach has the following unique properties. First, it allows for the incorporation of the best knowledge model into the regression procedure. Second, it can handle observations given in the form of linear functionals of the field variable.  Third, the method is non-parametric in the sense that it provides a systematic way to optimally determine the prior covariance based on the data. Fourth, our method can compute not only the mean prediction but also the posterior covariance, characterizing the uncertainty in our prediction of the physical state. These features distinguish our method from other regression methods. Numerical results were presented to highlight these features and demonstrate the superior performance of the proposed method relative to the standard GP regression.

We conclude the paper by pointing out several possible extensions and directions for further research. We would like to extend the proposed approach to nonlinear PDE models, which will broaden the application domain of our method. Nonlinear PDEs represent some significant challenges because the adjoint problems will depend on the true state which we do not know and because  our stochastic PDE does not preserve the Gaussian property of the functional $g$ due to nonlinearity.  We would also like to extend the method to goal-oriented statistical estimation in which we would like to infer new outputs rather the state of the physical system.

\section*{Acknowledgments}

We would like to thank Professor Robert M. Freund of Sloan School of Management at MIT for fruitful discussions. This work was supported by AFOSR Grant No. FA9550-11-1-0141, AFOSR Grant No. FA9550-12-0357, and the Singapore-MIT Alliance.

\appendix

\section{Bayesian linear functional regression}

In this section, we develop a nonparametric Bayesian framework for linear functional regression with Gaussian noise. We then show that this framework is equivalent to our functional Gaussian process regression described in Section 3. 

We begin by introducing an orthornormal basis set $\{\psi_i \in V\}_{i = 1}^J$ such that $\int_{\Omega} \psi_i \psi_j d \bm{x} = \delta_{ij}$, where  $\delta_{ij}$ is the Kronecker delta. We define associated linear functionals $\ell_i : V \to \mathbb{R}$ as
\begin{equation}
\ell_i(v)  = \int_{\Omega} \psi_i v d \bm{x} \equiv m(\psi_i,v), \quad i = 1,\ldots, J .
\end{equation}
The linear functional regression model is defined by
\begin{equation}
\label{LFRM}
g(v) = \sum_{i=1}^J w_i \ell_i(v) ,  \qquad y =  g(v) + \varepsilon ,
\end{equation}
where $v \in V$ is the input function, $\bm w = [w_1,\ldots,w_J]^T$ is a vector of weights (parameters) of the linear model, $g$ is the functional and $y \in \mathbb{R}$ is the observed target value. We assume that the observed value $y$ differs from the functional value $g(v)$ by additive noise
\begin{equation}
\varepsilon \sim \N(0,\sigma^2) .
\end{equation}
This noise assumption together with the model directly gives rise to the following likelihood
\begin{equation}
\label{appdix8}
p(\bm y | \Phi, \bm w ) = \prod_{j=1}^M  \frac{1}{\sqrt{2 \pi} \sigma} \exp \left( \frac{-(y_j - g(\phi_j))^2}{2 \sigma^2}\right) = \N(\bm L^T \bm w, \sigma^2),
\end{equation}
where $\bm L$ has entries $L_{ij} = \ell_i(\phi_j), i = 1,\ldots,J,j=1,\ldots,M$. Recall that $\phi_i, i = 1,\ldots,M$ are the adjoint states and that $y_i = d_i - s_i, i = 1,\ldots,M$ are the observed outputs which are the differences between the observed data and the best knowledge outputs.

In the Bayesian formalism we need to specify {\em a prior distribution} for $\bm w$, which encodes our belief about the weights prior to  taking into account the observations. We consider a zero-mean Gaussian prior distribution with covariance matrix $\bm \Lambda$ for the weights
\begin{equation}
\label{appdix9}
p(\bm w) = \N(0,\bm \Lambda) .
\end{equation}
Without loss of generality we assume that $\bm \Lambda$ is a diagonal matrix, that is, $\Lambda_{ij} = 0$ if $i \neq j$. If $\bm \Lambda$ is not a diagonal matrix then we can always introduce a new weight vector $\bm w' = \bm \Lambda^{-1/2} \bm w$, so that the covariance of $\bm w'$ is a diagonal matrix. Then we choose to work with $\bm w'$ instead of $\bm w$.  The posterior distribution over the weights is then obtained by using Bayes' rule as
\begin{equation}
\label{appdix10}
p(\bm w | \bm y , \Phi ) = \frac{p(\bm y | \Phi, \bm w ) p(\bm w)}{p(\bm y | \Phi)},
\end{equation}
where the normalizing constant is given by
\begin{equation}
p(\bm y | \Phi) = \int p(\bm y | \Phi, \bm w ) p(\bm w) d \bm w .
\end{equation}
Substituting (\ref{appdix8}) and (\ref{appdix9}) into (\ref{appdix10}) and working through some algebraic manipulations, we obtain
\begin{equation}
\label{pweights}
p(\bm w | \bm y , \Phi ) \sim \N( \frac{1}{\sigma^2} \bm B^{-1} \bm L \bm y ,\bm B^{-1}),
\end{equation}
where $\bm B = \sigma^{-2} \bm L \bm L^T + \bm \Lambda^{-1}$. The posterior distribution (\ref{pweights}) combines the likelihood and the prior distribution, and captures all information about the weights.

To make predictions for a test function $\phi^* \in V$ we average the linear functional regression model (\ref{LFRM}) over all possible values of the weights under their posterior distribution (\ref{pweights}). Hence, the predictive distribution for $g^* = g(\phi^*)$ at $\phi^*$ is given by 
\begin{equation}
\label{gdistribution}
g^* | \phi^*, \Phi, \bm y  \sim \N( \frac{1}{\sigma^2} \bm{l}(\phi^*)^T \bm B^{-1} \bm L \bm y , \bm{l}(\phi^*)^T \bm B^{-1} \bm{l}(\phi^*)),
\end{equation}
where $\bm{l}(\phi^*)$ has entries ${l}_i(\phi^*) = \ell_i(\phi^*), i = 1,\ldots,J$.  The predictive distribution is a Gaussian distribution as expected. However, the predictive distribution (\ref{gdistribution}) requires the matrix inversion $\bm B^{-1}$ which may be expensive. Fortunately, we can derive an equivalent distribution which is more efficient to compute than the original one (\ref{gdistribution}).

We recall the Woodbury, Sherman and Morrison (WSM) formula for the matrix inversion 
\begin{equation}
(\bm Z + \bm U \bm W \bm V^T)^{-1} = \bm Z^{-1} - \bm Z^{-1} \bm U (\bm W^{-1} + \bm V^T \bm Z^{-1} \bm U)^{-1} \bm V^T \bm Z^{-1} .
\end{equation}
Using the WSM formula with $\bm Z = \bm{\Lambda}, \bm W = \sigma^{-2} \bm I, \bm U = \bm V = \bm L$ we  obtain
\begin{equation}
\label{Binverse}
\bm B^{-1} = (\sigma^{-2} \bm L \bm L^T +  \bm \Lambda^{-1})^{-1} = \bm \Lambda - \bm \Lambda \bm L (\sigma^2 \bm I + \bm L^T \bm \Lambda \bm L)^{-1} \bm L^T \bm \Lambda \ .
\end{equation}
It thus follows that
\begin{equation}
\label{BinverseL}
\begin{split}
\bm B^{-1} \bm L & =  \bm \Lambda \bm L - \bm \Lambda \bm L (\sigma^2 \bm I + \bm L^T \bm \Lambda \bm L)^{-1} \bm L^T \bm \Lambda \bm L \\
& = \bm \Lambda \bm L (\sigma^2\bm I + \bm L^T \bm \Lambda \bm L)^{-1} ( (\sigma^2 \bm I + \bm L^T \bm \Lambda \bm L) -  \bm L^T \bm \Lambda \bm L) \\
& = \sigma^2 \bm \Lambda \bm L (\sigma^2\bm I + \bm L^T \bm \Lambda \bm L)^{-1} .
\end{split}
\end{equation}
Inserting (\ref{Binverse}) and (\ref{BinverseL}) into (\ref{gdistribution}) we get
\begin{equation}
\label{newgstar}
g^* | \phi^*, \Phi, \bm y  \sim \N(\bar{g}^*, \mathrm{var}({g^*})),
\end{equation}
where
\begin{equation}
\label{bargstar}
\begin{split}
\bar{g}^* & = \bm{l}(\phi^*)^T \bm \Lambda \bm L (\sigma^2 \bm I  + \bm L^T \bm \Lambda \bm L)^{-1}  \bm y , \\
\mathrm{var}({g^*}) & = \bm{l}(\phi^*)^T  \bm \Lambda  \bm{l}(\phi^*)) -  \bm{l}(\phi^*)^T  \bm \Lambda \bm L (\sigma^2 \bm I + \bm L^T \bm \Lambda \bm L)^{-1} \bm L^T \bm \Lambda  \bm{l}(\phi^*) 
\end{split}
\end{equation}
In the predictive distribution (\ref{newgstar}), we need to invert the matrix $(\sigma^2 \bm I + \bm L^T \bm \Lambda \bm L)$ of size $M \times M$. Hence, it is more attractive than the original distribution (\ref{gdistribution}).

Thus far, we have not mentioned how we construct the basis functions $\{\psi_i\}_{i=1}^J$ and the covariance matrix $\bm \Lambda$.
We observe that in the predictive distribution (\ref{newgstar}) we need to compute $\bm L^T \bm \Lambda \bm L, \bm{l}(\phi^*)^T \bm \Lambda \bm L, \bm{l}(\phi^*)^T  \bm \Lambda  \bm{l}(\phi^*))$, where both $\bm L$ and $\bm{l}(\phi^*)$ depend on the basis set $\{\psi_i\}_{i=1}^J$ through the definition of the linear functionals $\ell_i(\cdot), i = 1,\ldots,J$. The entries of these quantities have the form $\bm l (v)^T  \bm \Lambda  \bm{l}(v')$ for $v, v' \in V$. We can thus define 
\begin{equation}
k(v',v) = \bm l (v')^T  \bm \Lambda  \bm{l}(v)),  \quad \mbox{for all } v, v' \in V \ .
\end{equation}
It follows that
\begin{equation}
\label{kernelbasis}
k(v',v) = \sum_{i=1}^J \sum_{j=1}^J \ell_i(v') \Lambda_{ij} \ell_j(v) = \sum_{i=1}^J  m(\psi_i,v') \Lambda_{ii} m(\psi_i,v) , \quad \forall v',v  \in V,
\end{equation}
since $\ell_i(v) = m(\psi_i,v)$ and $\bm \Lambda$ is a diagonal matrix. We next choose $v' = \psi_n$ and invoke $m(\psi_i,\psi_n) = \delta_{in}$ to arrive at the following eigenvalue problem:
\begin{equation}
\label{eigenvalue}
k(\psi_{n},v) = \Lambda_{nn} m(\psi_n,v), \quad \forall v \in V .
\end{equation}
This equation shows that $\{\psi_n\}_{n=1}^J$ and $\{\Lambda_{nn}\}_{n=1}^J$ are the eigenfunctions and eigenvalues of the covariance operator $k(\cdot,\cdot)$. Once the covariance operator $k(\cdot,\cdot)$ is specified, we can construct the basis set  $\{\psi_n\}_{n=1}^J$ and the covariance matrix $\bm \Lambda$ by solving the eigenvalue problem (\ref{eigenvalue}).\footnote{Of course, we can also define the covariance operator $k(v',v)$ in terms of the basis set $\{\psi_n\}_{n=1}^J$ and the covariance matrix $\bm \Lambda$ by using the expression (\ref{kernelbasis}). However, this series expansion of the covariance operator render the inner products too computationally expensive and should be avoided. Instead, one should work directly with an explicitly analytical form of the covariance operator as listed in Table 2.} We can then make predictions using the equations (\ref{newgstar}) and (\ref{bargstar}). The predictive distribution (\ref{newgstar}) is nothing but the same as that of functional Gaussian process regression described in Section 3. However, unlike nonparametric Bayesian inference, functional Gaussian process regression does not require the computation of  eigenfunctions and eigenvalues. This is because functional Gaussian process regression needs only to compute the inner products $k(v,v')$ for some pairs $v, v' \in V$, which is known as the {\em kernel trick} \cite{RasWil06}.


\bibliographystyle{plain}
\bibliography{library.bib}

%
%
%

\end{document}